\DeclareRobustCommand{\hrefs}[1]{%
  \def\temphrefsurl{#1}
  \IfBeginWith{\temphrefsurl}{http://}{%
    \StrGobbleLeft{\temphrefsurl}{7}[\temphrefsdisplayurl]
  }{%
    \IfBeginWith{\temphrefsurl}{https://}{%
      \StrGobbleLeft{\temphrefsurl}{8}[\temphrefsdisplayurl]
    }{%
      \def\temphrefsdisplayurl{\temphrefsurl}
    }%
  }%
  \href{\temphrefsurl}{\texttt{\temphrefsdisplayurl}}
}
\newtheorem*{theorem*}{Theorem}
\newtheorem{observation}{Observation}
\newtheorem{proposition}{Proposition}
\newtheorem{conjecture}{Conjecture}
\newtheorem{corollary}{Corollary}
\newtheorem{lemma}{Lemma}
\theoremstyle{remark}
\theoremstyle{definition}
\newtheorem{definition}{Definition}
\newcommand{\tb}[1]{\textbf{#1}}
\newcommand{\ti}[1]{\textit{#1}}
\renewcommand{\O}{\mathcal{O}}
\newcommand{\E}{\mathcal{E}}
\newcommand{\K}{\mathcal{K}}
\renewcommand{\L}{\mathcal{L}}
\newcommand{\torp}[2]{\texorpdfstring{#1}{#2}}
\newcommand{\rc}{\raisebox{0.3ex}{,}}
\newcommand{\rd}{\raisebox{0.3ex}{.}}
\title{Four special Poncelet triangle families about the incircle}
\author[R. Garcia]{Ronaldo A. Garcia}
\author[M. Helman]{Mark Helman}
\author[D. Reznik]{Dan Reznik} 
\begin{document}
\maketitle
\vspace{-1.5em}  
\begin{center}
\today
\end{center}

\begin{abstract}
We describe four special families of ellipse-inscribed Poncelet triangles about the incircle which maintain certain triangle centers stationary and which also display interesting conservations.
\end{abstract}

\section{Introduction}

As special cases to \cite{garcia2024-incircle}, we introduce four special families of Poncelet triangles inscribed in an ellipse $\E$ and circumscribing their fixed incircle, let $C$ be its center, i.e., the \ti{incenter} $X_1$ (triangle center notation $X_k$ are after Kimberling \cite{etc}). Referring to \cref{fig:four-families}, the four Poncelet families are:
\begin{itemize}
\item focal-$X_1$: the caustic is centered on a focus of $\E$; \item iso-$X_2$: caustic is centered on a special point on $\E$'s minor axis such that the barycenter $X_2$ is stationary;
\item focal-$X_4$: caustic is centered on a special point $\E$'s major axis such that the orthocenter $X_4$ is stationary on a focus of $\E$;
\item iso-$X_7$: caustic is centered on a another special point on $\E$'s major axis such that the Gergonne point $X_7$ is stationary.
\end{itemize}

In the sections below, $R$, $r$, $l_i,i=1,2,3$ refer to a triangle's circumradius, inradius, and sidelengths, respectively, see \cite{mw} for definitions.

\begin{figure}
\centering
\includegraphics[width=\linewidth]{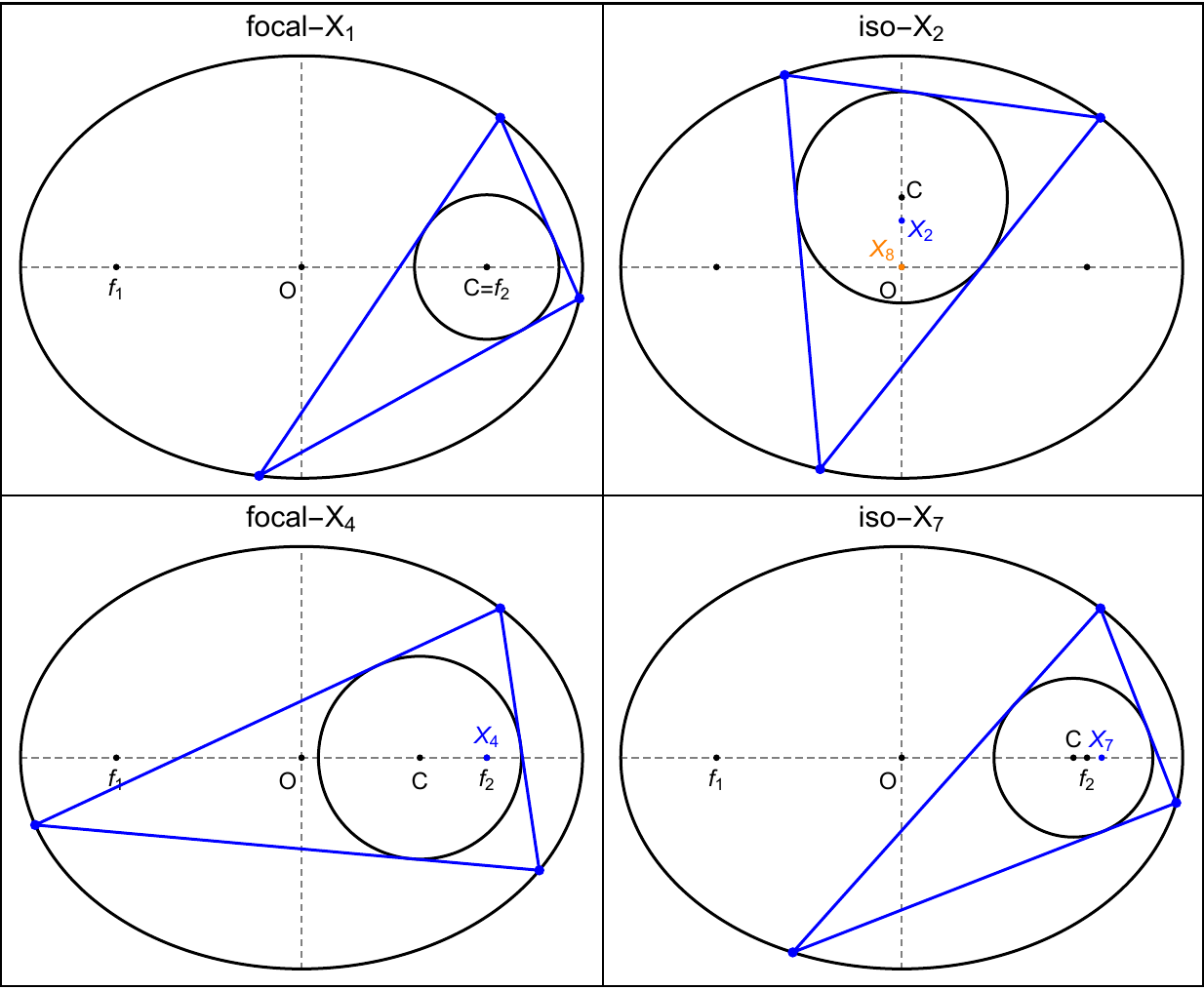}
\caption{Four families of ellipse-inscribed Poncelet triangles about the incircle. \tb{top-left}: focal-$X_1$ ($C$ at a focus of $\E$); \tb{top-right}: iso-$X_2$ ($C$ on minor axis of $\E$, barycenter $X_2$ stationary); \tb{bot.-left}: focal-$X_4$ ($X_4$ is a focus of $\E$); \tb{bot.-right}: iso-$X_7$ ($C$ on major axis of $\E$, $X_7$ stationary). Video: \hrefs{https://youtu.be/nsHDfX6\_7mA}}
\label{fig:four-families}
\end{figure}

\section{\torp{Focal-$X_1$}{Focal-X(1)}}

Let $\E$ be given by $(x/a)^2+(y/b)^2=0$. Referring to \cref{fig:four-families} (top left):

\begin{proposition}
The caustic centered at $C_1=[c,0]$ admits a Poncelet family of triangles with $r_1$ given by:
\[r_1={\frac {{b}^{2}}{{c}^{2}} \left( \sqrt {{a}^{2}+{c}^{2}}-a \right) }\cdot\]
\end{proposition}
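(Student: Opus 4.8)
The plan is to avoid any return-map analysis and instead lean on Poncelet's closure theorem: to prove the circle $\odot(C_1,r_1)$ is the caustic of a triangle family inscribed in $\E$, it suffices to exhibit a \emph{single} triangle inscribed in $\E$ having that circle as its incircle, since closure for one triangle forces closure for the whole pencil. I would hunt for this triangle among those symmetric about the major axis. Such a triangle has its apex at a vertex $(-a,0)$ of $\E$ (the only axis points on $\E$) and a vertical base chord $x=x_1$ with endpoints $(x_1,\pm y_1)$, $y_1=b\sqrt{1-x_1^2/a^2}$; by symmetry its incenter lies on the $x$-axis, and tangency to the vertical base puts the incenter at $(x_1-r,0)$. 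Requiring the incircle to be centered at $C_1=[c,0]$ then reads simply $x_1=c+r$, and as $x_1$ sweeps the axis this condition picks out one admissible triangle.

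Next I would write the inradius as $r=\mathrm{Area}/s$, with $\mathrm{Area}=y_1 h$ and semiperimeter $s=\sqrt{h^2+y_1^2}+y_1$, where $h=x_1+a$ is the altitude from apex to base. Rationalizing the surd collapses this to the clean relation $y_1^2(h-2r)=r^2 h$. Substituting $x_1=c+r$ (so $a-x_1=a-c-r$ and $h-2r=a+c-r$) together with the ellipse constraint $y_1^2=b^2(a^2-x_1^2)/a^2$, and cancelling the common factor $h$, leaves $b^2(a-c-r)(a+c-r)=a^2 r^2$, that is, $b^2\!\left[(a-r)^2-c^2\right]=a^2 r^2$. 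At this point the focal hypothesis enters: since $C_1$ sits at a focus, $c^2=a^2-b^2$, whence $a^2-b^2=c^2$ and $a^2-c^2=b^2$, and the relation reduces to the quadratic $c^2 r^2+2ab^2 r-b^4=0$, whose positive root is exactly $r_1=\frac{b^2}{c^2}\!\left(\sqrt{a^2+c^2}-a\right)$.

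The step I expect to carry the real weight is conceptual rather than computational: justifying that pinning down one symmetric triangle is legitimate (Poncelet closure), and noticing that the focal relation $c^2=a^2-b^2$ is precisely what triggers the collapse — without it one is stuck with the ungainly $(a^2-b^2)r^2+2ab^2 r-b^2(a^2-c^2)=0$, which does not factor through $\sqrt{a^2+c^2}$. As a fully mechanical cross-check I would run Cayley's criterion for $n=3$: with $\E$ given by $\mathrm{diag}(1/a^2,1/b^2,-1)$ and the caustic by the symmetric matrix of $(x-c)^2+y^2-r^2$, expand $\sqrt{\det\!\left(t\,M_\E+M_{\mathrm{circ}}\right)}=c_0+c_1 t+c_2 t^2+\cdots$ and impose $c_2=0$; this reproduces the same algebraic condition on $r$, and substituting $c^2=a^2-b^2$ again yields $r_1$. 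I would keep the symmetric-triangle derivation as the main argument, since it is shorter and geometrically transparent, and use Cayley only to confirm there is no spurious second branch.
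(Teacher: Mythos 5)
Your proof is correct, but it takes a genuinely different route from the paper. The paper's proof is a one-line substitution: it invokes the general formula from Prop.~2 of the companion paper \cite{garcia2024-incircle} for the inradius of a circular Poncelet caustic centered at $[x_c,y_c]$, namely $r=\bigl(b\sqrt{a^4-c^2x_c^2}-a\sqrt{b^4+c^2y_c^2}\bigr)/c^2$, and sets $[x_c,y_c]=[c,0]$; the identity $a^4-c^4=b^2(a^2+c^2)$ then yields $r_1$ immediately. You instead give a self-contained argument: Poncelet closure reduces existence of the family to exhibiting one inscribed triangle circumscribing $\odot(C_1,r_1)$, and you produce it explicitly as the isosceles triangle symmetric about the major axis, where the incircle-tangency condition $x_1=c+r$ and the relation $y_1^2(h-2r)=r^2h$ (your rationalization of $r=\mathrm{Area}/s$ is right) combine with $c^2=a^2-b^2$ to give $c^2r^2+2ab^2r-b^4=0$ and the stated positive root. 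Your version is longer but independent of the cited prior work and makes visible exactly where the focal hypothesis enters; the paper's version is shorter but opaque without Prop.~2 in hand. One small gap worth closing in your write-up: you solve for $r$ assuming the symmetric triangle exists, so you should confirm that the positive root actually yields a real base chord, i.e.\ $x_1=c+r<a$. This does hold: writing $f(r)=c^2r^2+2ab^2r-b^4$, one has $f(0)=-b^4<0$ while $f(a-c)=(a-c)^2a^2>0$ (using $b^2=(a-c)(a+c)$), so the positive root lies in $(0,a-c)$ and the construction is legitimate.
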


\begin{proof}
This follows from \cite[Prop. 2]{garcia2024-incircle}:
the radius $r$ for a circular caustic (of Poncelet triangles) with center at $[x_c,y_c]$ is given by:
\[ r=\frac{b\sqrt{a^4-c^2 x_c^2}- a\sqrt{b^4+c^2 y_c^2}}{c^2}\cdot \qedhere \]
\end{proof}
\begin{observation}
The focal-$X_1$ family is the polar image of the vertices of Chapple's Porism (a porism of triangles interscribed between two circles \cite{odehnal2011-poristic}), with respect to the circumcircle, i.e., it is its tangential triangle. The former's incenter $X_1$ coincides with the latter's circumcenter $X_3$.
\end{observation}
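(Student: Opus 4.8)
The plan is to realize the entire correspondence through polar reciprocation (pole--polar duality) with respect to the circumcircle of Chapple's porism, and to read off both assertions from how this duality acts on the two defining circles. First I would fix notation for the porism: let its triangles be inscribed in a circumcircle $\Gamma$ of center $O$ and radius $R$, and circumscribe a fixed incircle $\omega$ of center $I$ and radius $\rho$, so that Euler's relation $|OI|^2=R^2-2R\rho$ holds automatically along the family. Let $\Phi$ denote polar reciprocation with respect to $\Gamma$; it is an involutive correlation sending points to lines and lines to points, it fixes $\Gamma$ (points of $\Gamma$ correspond to their tangent lines), and by construction it carries each porism triangle $T$ to its tangential triangle $T^\ast$: the polar of a vertex lying on $\Gamma$ is the tangent to $\Gamma$ there, and these three tangents are the sides of $T^\ast$.

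Next I would track the two circles under $\Phi$. Because the vertices of $T$ lie on $\Gamma$, the sides of $T^\ast$ are tangent to $\Gamma$; hence $\Gamma$ is inscribed in every $T^\ast$ and, for acute $T$, is its incircle. Because the sides of $T$ are tangent to $\omega$, their poles --- which are exactly the vertices of $T^\ast$ --- lie on the fixed conic $\omega^\ast=\Phi(\omega)$. Thus, as $T$ sweeps the porism, $T^\ast$ sweeps a one-parameter family of triangles inscribed in $\omega^\ast$ and circumscribing the circle $\Gamma$ centered at $O$; since a correlation preserves the Poncelet closure property, this is again a Poncelet family. It then remains to identify $\omega^\ast$ with $\E$ and to locate $O$ on it.

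The crux is the identification of $\omega^\ast$, for which I would invoke the classical fact that the polar reciprocal of a circle with respect to a circle centered at $O$ is a conic with a focus at $O$. Concretely, placing $O$ at the origin and $I=[d,0]$, the support function of $\omega$ is $h_\omega(\phi)=\rho+d\cos\phi$, so $\omega^\ast$ has the focal polar equation $r(\phi)=R^2/(\rho+d\cos\phi)$, a conic of eccentricity $e=d/\rho$, semi-latus rectum $R^2/\rho$, and focus $O$. This is an ellipse exactly when $d<\rho$, i.e.\ when the circumcenter lies inside the incircle, and that ellipse is $\E$; matching $e$, the semi-latus rectum, and the radius $R$ of $\Gamma$ against the data $a,b,c$ of $\E$ reproduces the caustic radius $r_1=(b^2/c^2)(\sqrt{a^2+c^2}-a)$ of the Proposition (indeed this matching is equivalent to Euler's relation). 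Hence $\{T^\ast\}$ is precisely the focal-$X_1$ family: triangles inscribed in $\E$ with caustic the circle $\Gamma$ centered at the focus $O=[c,0]$. Finally, both named-center claims collapse to a single point: the incircle of $T^\ast$ is $\Gamma$, so the incenter $X_1$ of the focal-$X_1$ triangle is the center $O$ of $\Gamma$, which is simultaneously the circumcenter $X_3$ of the porism triangle $T$; thus $X_1$ of the former equals $X_3$ of the latter.

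I expect the main obstacle to be the bookkeeping in this last identification: verifying that the focal conic $\omega^\ast$ is exactly $\E$ with the stated semi-axes and that the shared circle $\Gamma$ has radius equal to the Proposition's $r_1$, together with the care needed to stay in the regime $d<\rho$ where the reciprocal is a bona fide ellipse rather than a hyperbola. The conceptual steps --- that the tangential triangle circumscribes the circumcircle, and that reciprocating a circle produces a focal conic --- are standard and carry no real difficulty.
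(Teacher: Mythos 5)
Your proof is correct, and it fleshes out exactly the argument the paper leaves implicit: the statement appears only as an Observation with no proof given, its content being precisely the polar-duality correspondence you develop. Your chain --- tangents to $\Gamma$ at the vertices of $T$ form the tangential triangle $T^\ast$, poles of the sides of $T$ sweep the dual conic $\omega^\ast$, and the focal polar equation $r(\phi)=R^2/(\rho+d\cos\phi)$ identifies $\omega^\ast$ as a conic with focus at the porism's circumcenter $O$ --- is the standard and intended route, and your remark that the parameter matching is equivalent to Euler's relation does indeed recover the paper's formula $r_1=(b^2/c^2)(\sqrt{a^2+c^2}-a)$ from its Proposition 1. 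One loose end you flag but do not quite close: the acuteness caveat (needed so that $\Gamma$ is the incircle of $T^\ast$ rather than an excircle) is automatic in the regime $d<\rho$ that you already require for $\omega^\ast$ to be an ellipse, since a circumcenter lying inside the incircle lies inside the triangle, which forces the triangle to be acute. Adding that one sentence makes the identification of $X_1(T^\ast)$ with $X_3(T)$ airtight, and simultaneously shows that the ellipse regime and the incircle regime coincide, so the correspondence with the paper's focal-$X_1$ configuration holds exactly where your construction produces it.
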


Bicentric $n$-gons (a generalization of Chapple's porism) conserve the sum of cosines of their internal angles $\theta_i$ \cite{roitman2021-bicentric}. On the other hand, the polar family with respect to the circumcircle conserves $\sum_{i=1}^{n}{\sin(\theta_i/2)}$, for all $n\geq 3$  \cite[prop.26]{bellio2022-parabola-inscribed}.

\begin{proposition}
For the focal-$X_1$ family, the sum of half-angle sines is invariant and given by: 
\[ \sum_{i=1}^{3}\sin\frac{\theta_i}{2} = \frac{c^2 -a^2 + a \sqrt{a^2 + c^2}}{c^2} \cdot \]
\end{proposition}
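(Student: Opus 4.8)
The plan is to treat the two claims separately. The invariance of $\sum_i\sin(\theta_i/2)$ is already in hand: by the preceding Observation the focal-$X_1$ family is the polar image, with respect to the circumcircle, of the vertices of Chapple's porism, and any such polar family conserves $\sum_i\sin(\theta_i/2)$ by \cite[prop.26]{bellio2022-parabola-inscribed}. Granting invariance, it suffices to evaluate the sum on one convenient member, for which I would take the triangle symmetric about the major axis.

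The bridge between the angles and the ellipse geometry is the elementary relation, valid in any triangle with inradius $r$ and incenter $I$, that the distance from $I$ to a vertex $V_i$ equals $d_i=r/\sin(\theta_i/2)$ (the incenter lies on the bisector at $V_i$, at distance $r$ from the two adjacent sides). Hence $\sin(\theta_i/2)=r_1/d_i$ and
\[ \sum_{i=1}^{3}\sin\frac{\theta_i}{2}=r_1\sum_{i=1}^{3}\frac{1}{d_i}, \]
with $r_1$ as in Proposition 1. The crucial simplification is that $I=C_1=[c,0]$ is a \emph{focus} of $\E$, so each $d_i$ is a focal radius: for $V_i=(x_i,y_i)\in\E$ one has $d_i=a-(c/a)x_i$, turning the sum into the rational expression $\sum_i a/(a^2-c\,x_i)$.

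On the symmetric member I would place the apex at the right vertex $V_1=[a,0]$, so that $d_1=a-c$; the opposite side is then the vertical chord tangent to the incircle on its far (left) side, i.e.\ at $x=c-r_1$, and the two base vertices share this abscissa, giving $d_2=d_3=(b^2+c\,r_1)/a$ after using $a^2-c^2=b^2$. Writing $S=\sqrt{a^2+c^2}$ and recalling $r_1=b^2(S-a)/c^2$, I would substitute into $r_1\big(d_1^{-1}+2\,d_2^{-1}\big)$.

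The one genuinely laborious step is this final algebraic reduction. After factoring out $(S-a)$ and clearing denominators, the target reduces to the polynomial identity
\[ (S-a)\big[(a+c)S+2ac+c^2-a^2\big]=(aS+c^2-a^2)(S+c-a), \]
which I would verify by expanding both sides and replacing $S^2$ by $a^2+c^2$; both sides collapse to $2a^3-a^2c+c^3+(ac-2a^2+c^2)S$. This yields $\sum_i\sin(\theta_i/2)=(aS-b^2)/c^2=(c^2-a^2+a\sqrt{a^2+c^2})/c^2$, as claimed. The only subtlety to guard against is the choice of apex: the mirror member with apex at $[-a,0]$ must give the same value (a useful consistency check on invariance), but the right-vertex choice keeps the computation shortest.
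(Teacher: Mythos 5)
Your proposal is correct, and it actually supplies more than the paper does: the paper states this proposition with no proof at all, the invariance being implicitly delegated to the preceding Observation (focal-$X_1$ is the tangential/polar image of Chapple's porism) together with \cite[prop.26]{bellio2022-parabola-inscribed}, and the closed-form value left to what is elsewhere in the paper called ``CAS-based simplification.'' Your invariance step is therefore identical to the paper's; the difference is that you replace the unstated CAS evaluation with an explicit computation on the axially symmetric member. I checked the key ingredients: the relation $|X_1V_i|=r/\sin(\theta_i/2)$ is standard, the focal-radius formula $d_i=a-(c/a)x_i$ applies exactly because the incenter sits at the focus $[c,0]$, and your intermediate quantities are right, namely $r_1/d_1=(a+c)(S-a)/c^2$ and $2r_1/d_2=2a(S-a)/\bigl(c(S+c-a)\bigr)$ with $S=\sqrt{a^2+c^2}$. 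The final identity $(S-a)\bigl[(a+c)S+2ac+c^2-a^2\bigr]=(aS+c^2-a^2)(S+c-a)$ does hold: both sides expand to $2a^3-a^2c+c^3+(ac-2a^2+c^2)S$ after substituting $S^2=a^2+c^2$, and numerically (for $a=2$, $b=1$) both your route and the stated formula give $\approx 1.43050$. The one point that deserves an explicit sentence is your description of the symmetric member: that the side opposite the apex $[a,0]$ is the vertical chord tangent to the incircle at $x=c-r_1$ is a consequence of Poncelet closure (guaranteed by the proposition establishing $r_1$) combined with the reflection symmetry of the configuration, not something one may simply posit; with that remark added, your argument is a complete, self-contained proof, which is a genuine improvement over the citation-plus-CAS treatment in the paper.
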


\section{\torp{Iso-$X_2$}{Iso-X(2)}}

Referring to \cref{fig:four-families} (top right), it follows from the expressions for the center $C_2$ and semi-axis lengths $a_2,b_2$ for the elliptic locus of the barycenter $X_2$ \cite[Prop. 3]{garcia2024-incircle} that:

\begin{proposition}
For a circular caustic with center $C_2=\left[0,\frac{c\,b}{2 a}\right]$ and radius $r_2=\frac{b}{2}$, the barycenter $X_2$ is stationary at $\left[0,\frac{c\,b}{3a}\right]$.
\end{proposition}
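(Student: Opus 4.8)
The plan is to read stationarity of $X_2$ as the degeneration of its locus to a single point. By \cite[Prop.~3]{garcia2024-incircle}, as the triangle runs over the Poncelet family cut out by a circular caustic the barycenter $X_2$ traces an ellipse; denote by $C_2(x_c,y_c)$, $a_2(x_c,y_c)$, $b_2(x_c,y_c)$ its center and semi-axes as functions of the caustic center $[x_c,y_c]$, bearing in mind that the caustic radius is not free but is pinned by Poncelet closure to $r=\frac{b\sqrt{a^4-c^2x_c^2}-a\sqrt{b^4+c^2y_c^2}}{c^2}$ from \cite[Prop.~2]{garcia2024-incircle}. The barycenter is stationary exactly when this locus ellipse shrinks to a point, i.e. when $a_2=b_2=0$, and its fixed position is then the common center $C_2$. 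Thus the statement reduces to checking that the prescribed caustic satisfies these equations and to evaluating $C_2$ there.

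First I would confirm that the quoted center and radius are mutually consistent: substituting $x_c=0$, $y_c=\frac{cb}{2a}$ into the radius formula and using the focal relation $c^2=a^2-b^2$ together with the identity $b^2+\frac{c^4}{4a^2}=\bigl(\frac{a^2+b^2}{2a}\bigr)^2$ collapses it to $r=\frac{b}{2}$; indeed this shows that, on the minor axis, the closure radius equals $\frac{b}{2}$ precisely when $y_c=\frac{cb}{2a}$. Next I would substitute $x_c=0$, $y_c=\frac{cb}{2a}$ (hence $r=\frac{b}{2}$) into the \cite[Prop.~3]{garcia2024-incircle} expressions for $a_2$ and $b_2$ and simplify, once more reducing through $c^2=a^2-b^2$, to obtain $a_2=b_2=0$. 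Finally, evaluating $C_2$ at the same point should return $\bigl[0,\frac{cb}{3a}\bigr]$; the vanishing of its abscissa is guaranteed in advance by the reflection symmetry $x\mapsto-x$ of the ellipse-plus-caustic configuration, which forces the whole $X_2$-locus, and hence its center, onto the minor axis.

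The main obstacle I anticipate is the simultaneous vanishing $a_2=b_2=0$. Once $x_c=0$ and closure are imposed only the height $y_c$ remains free, so two conditions are being asked of one parameter, and the content of the proposition is that they are compatible exactly at $y_c=\frac{cb}{2a}$. Counted over the full three-parameter family of caustics $[x_c,y_c;r]$, closure together with $a_2=0$ and $b_2=0$ is three equations in three unknowns, and the claim is that $\bigl(0,\frac{cb}{2a},\frac{b}{2}\bigr)$ solves them; the $x\mapsto-x$ symmetry explains the location but does not by itself force either semi-axis to vanish. I therefore expect the genuine work to be the algebraic reduction of the two semi-axis formulas of \cite[Prop.~3]{garcia2024-incircle}, where the focal relation and the perfect-square identity above should supply the cancellations. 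Should the Prop.~3 expressions prove unwieldy, a fallback is to parametrize the Poncelet triangles explicitly, form $X_2=\frac13(V_1+V_2+V_3)$, and verify directly that it is independent of the family parameter.
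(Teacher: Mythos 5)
Your proposal takes essentially the same route as the paper: the paper justifies this proposition precisely by appealing to the expressions for the center $C_2$ and semi-axis lengths $a_2,b_2$ of the elliptic locus of $X_2$ in \cite[Prop.~3]{garcia2024-incircle}, i.e.\ by observing that at the stated caustic the locus degenerates to its center, which is exactly your reduction to $a_2=b_2=0$ with the stationary point read off as $C_2$. Your additional consistency check of the radius via \cite[Prop.~2]{garcia2024-incircle}, using $c^2=a^2-b^2$ and the identity $b^2+\frac{c^4}{4a^2}=\left(\frac{a^2+b^2}{2a}\right)^2$ to collapse it to $r=\frac{b}{2}$, is correct and consistent with how the paper uses that formula elsewhere.
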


\begin{corollary}
Over iso-$X_2$ triangles, Nagel's point $X_8$ is stationary at $E$'s center.
\end{corollary}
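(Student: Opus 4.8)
The plan is to exploit the classical affine relation among the incenter $X_1$, the barycenter $X_2$, and the Nagel point $X_8$ that holds for every triangle. Writing the three centers in normalized barycentric coordinates against the vertices $A,B,C$, one has $X_2=\tfrac13(A+B+C)$, $X_1=\tfrac{1}{2s}(l_1A+l_2B+l_3C)$, and $X_8=\tfrac{1}{s}\bigl((s-l_1)A+(s-l_2)B+(s-l_3)C\bigr)$, where $s$ is the semiperimeter and $l_i$ the sidelengths. A direct substitution collapses these into the Nagel-line identity $X_8=3X_2-2X_1$; equivalently, $X_1,X_2,X_8$ are collinear with $X_2$ dividing the segment $X_1X_8$ in ratio $1:2$. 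This is the only structural fact needed, and it is independent of which Poncelet family we sit in.

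First I would observe that both centers on the right-hand side are already pinned down for iso-$X_2$. Since the fixed caustic is the common incircle of the family, its center is the (stationary) incenter, so $X_1=C_2=\left[0,\tfrac{c\,b}{2a}\right]$. The preceding Proposition fixes the barycenter at $X_2=\left[0,\tfrac{c\,b}{3a}\right]$. Substituting into the identity gives
\[
X_8=3\left[0,\tfrac{c\,b}{3a}\right]-2\left[0,\tfrac{c\,b}{2a}\right]=\left[0,\ \tfrac{c\,b}{a}-\tfrac{c\,b}{a}\right]=[0,0],
\]
which is precisely the center of $\E$. As the right-hand side is constant over the family, $X_8$ is stationary there, proving the claim.

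The computation itself is a one-line affine combination, so I expect no genuine obstacle once the underlying identity is in hand. The only point requiring care is the justification of $X_8=3X_2-2X_1$: I would either cite the standard Nagel-line ratio or include the two-line barycentric verification above, and confirm the orientation of the ratio so that the coefficients $3$ and $-2$ are not transposed. Everything else follows immediately from the stationarity of $X_1$ (as the fixed caustic center) and of $X_2$ (from the Proposition).
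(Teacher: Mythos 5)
Your proof is correct and follows essentially the same route as the paper: both rest on the Nagel-line relation between $X_1$, $X_2$, $X_8$ combined with the stationary positions of the incenter (at $C_2$) and the barycenter (from the preceding Proposition). If anything, your vector form $X_8=3X_2-2X_1$ is slightly tidier than the paper's distance identity $|X_1X_8|=3|X_1X_2|$, since it encodes the collinearity and orientation that the distance version leaves implicit.
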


\begin{proof}
Direct from the location of the stationary barycenter and the fact that $|X_1 X_8| = 3 |X_1 X_2|$ \cite[Incenter, eqn.11]{mw}. In fact,
$|X_1 X_8|=3(c\,b/(2a)-c\,b/(3a))=c\,b/(2a)=|C_2|$.
\end{proof}

\begin{corollary}
The Spieker center $X_{10}$, incenter of the medial triangle and the midpoint of the incenter $X_1$ and Nagel's point $X_8$ \cite{etc} will be stationary on $E$'s minor axis.
\end{corollary}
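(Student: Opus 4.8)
The plan is to exploit the defining property of the Spieker center stated in the corollary itself: $X_{10}$ is the midpoint of the incenter $X_1$ and Nagel's point $X_8$. Once both endpoints of this segment are known to be stationary and to lie on $\E$'s minor axis, the conclusion is immediate, since the midpoint of two fixed points on the minor axis is again a fixed point on the minor axis.

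First I would locate $X_1$. Because the entire iso-$X_2$ family is a Poncelet family about a \emph{fixed} circular caustic (the incircle) of center $C_2=\left[0,\,cb/(2a)\right]$, the incenter $X_1$ coincides with $C_2$ and is therefore stationary, with zero abscissa, i.e.\ on the minor axis. Next, the preceding corollary supplies the other endpoint: $X_8$ is stationary at $\E$'s center $[0,0]$, which also lies on the minor axis.

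Finally I would compute the midpoint,
\[
X_{10} = \tfrac{1}{2}\left(X_1 + X_8\right) = \tfrac{1}{2}\left(\left[0,\tfrac{cb}{2a}\right] + [0,0]\right) = \left[0,\tfrac{cb}{4a}\right],
\]
which is fixed and has zero abscissa, hence lies on the minor axis as claimed. There is no genuine obstacle here: the only fact requiring confirmation is that $X_1$ is stationary, and this is forced by the Poncelet construction, since the caustic is held fixed throughout the family. The remainder is a one-line midpoint computation.
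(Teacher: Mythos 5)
Your proof is correct and is precisely the argument the paper intends: the corollary is left unproved there because it follows immediately from the fixed incircle center $C_2=\left[0,\,cb/(2a)\right]$ (the stationary incenter $X_1$), the preceding corollary placing $X_8$ at $\E$'s center, and the midpoint property of $X_{10}$ cited in the statement itself. Your explicit computation $X_{10}=\left[0,\,cb/(4a)\right]$ just makes this one-line reasoning concrete.
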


\begin{observation}
Since the barycenter $X_2$ is stationary, the family conserves $|X_1 X_2|$, which for any triangle with sidelengths $l_i$ is given by \cite[Triangle Centroid, eqn.11]{mw}:
\[ |X_1-X_2|^2 = -\frac{\sum{l_i^3}+9\prod{l_i}-2 \left(l_2 l_1^2+l_3 l_1^2+l_2^2 l_1+l_3^2 l_1+l_2 l_3^2+l_2^2 l_3\right)}{9\sum{l_i}} \rd\]
\end{observation}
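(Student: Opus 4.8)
The plan is to reduce the stated conservation to the elementary fact that a segment joining two points which are each fixed throughout the family has constant length, and then to read off the invariance of the displayed symmetric function of the sidelengths as an immediate corollary of the quoted identity.

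First I would observe that, by construction of every family in this paper, the caustic is the common incircle, which is held fixed; its center is by definition the incenter $X_1$, so $X_1$ is a fixed point of the plane. In the iso-$X_2$ case, Proposition 3 places this center at $C_2=[0,cb/(2a)]$ and shows that the barycenter is stationary at $X_2=[0,cb/(3a)]$. Hence both endpoints of the segment $X_1 X_2$ are fixed, and $|X_1 X_2|$ is trivially constant; explicitly $|X_1 X_2|=\tfrac{cb}{2a}-\tfrac{cb}{3a}=\tfrac{cb}{6a}$, so the conserved value is $\tfrac{c^2 b^2}{36 a^2}$.

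Second, I would invoke the cited Mathworld identity, valid for an arbitrary triangle, which expresses $|X_1-X_2|^2$ as the displayed rational symmetric function of the sidelengths $l_1,l_2,l_3$. Since the left-hand side has just been shown to equal the constant $c^2 b^2/(36 a^2)$ on every member of the iso-$X_2$ family, the right-hand side must take this same value for every triangle in the family, even though each individual sidelength varies from frame to frame; this is exactly the asserted conservation, and it is the genuinely nonobvious payoff, since the invariant quantity is a complicated combination of the moving sidelengths.

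I expect no real obstacle here: the symmetric-function formula is quoted rather than rederived, and the geometric content amounts only to the remark that the distance between two stationary points is constant. The single point deserving an explicit word is the identification $X_1=C_2$ — that the incenter coincides with the caustic center rather than merely lying on the minor axis — since the whole statement collapses to triviality precisely once both endpoints of $X_1X_2$ are recognized as fixed.
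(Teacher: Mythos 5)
Your proposal is correct and matches the paper's (implicit) reasoning exactly: the observation rests on precisely the two facts you cite --- the incenter is fixed as the caustic's center $C_2$ and the barycenter is stationary by the preceding proposition --- after which the quoted MathWorld identity converts constancy of $|X_1X_2|$ into the asserted conservation of the symmetric function of sidelengths. Your explicit value $|X_1X_2|=cb/(6a)$ is a small bonus not stated in the paper, and it is consistent with the paper's corollary computation $|X_1X_8|=3|X_1X_2|=cb/(2a)$.
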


\begin{definition}
The MacBeath Poncelet family, shown in \cref{fig:macbeath-dual} (left), are triangles inscribed in a circle, and circumscribing the \ti{MacBeath inconic}, whose foci are the circumcenter $X_3$ and the orthocenter $X_4$, Its center is the Euler center $X_5$, the midpoint of the circumcenter $X_3$ and the orthocenter $X_4$ \cite[MacBeath inconic]{mw}.
\end{definition}

\begin{lemma}
\label{lem:x2-macbeath}
The barycenter $X_2$ of the MacBeath family is stationary and lies on the major axis of the inconic.
\end{lemma}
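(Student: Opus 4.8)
The plan is to place the fixed circumcircle at the origin, so that its center is the common circumcenter $X_3=O=0$ of every triangle of the family, and to exploit the classical vector identity that for a triangle with vertices $A,B,C$ on a circle centred at the origin one has orthocenter $X_4=A+B+C$, whence centroid $X_2=(A+B+C)/3=X_4/3$ and nine-point center $X_5=(A+B+C)/2=X_4/2$. Since all triangles of the family are inscribed in the same circle, $X_3=O$ is automatically fixed, so the entire statement reduces to showing that $X_4=A+B+C$ is the same point for every triangle of the family; for then $X_2=X_4/3$ is stationary as well.

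First I would record that the MacBeath inconic $\mu$ is, by hypothesis, a single fixed conic circumscribed by every triangle of the family, and that one of its foci already sits at $O$: indeed $\mu$ is the MacBeath inconic of the seed triangle, whose foci are that triangle's $X_3=O$ and $X_4$. The crux is then to argue that $\mu$ is the MacBeath inconic not merely of the seed triangle but of \emph{every} triangle $T$ in the Poncelet family. For this I would invoke the classical theorem that the two foci of any inconic of a triangle are isogonal conjugates with respect to that triangle, together with the fact that the circumcenter and orthocenter are themselves an isogonal-conjugate pair, $X_3^\ast=X_4$. Since $\mu$ is tangent to all three sides of $T$ it is an inconic of $T$, and since one of its foci lies at $O=X_3(T)$, its other focus must be the isogonal conjugate $X_3(T)^\ast=X_4(T)$. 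Hence $\mu$ is precisely the MacBeath inconic of $T$, and the orthocenter $X_4(T)$ coincides with the fixed second focus of $\mu$ for every $T$.

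With $X_4$ thus pinned to the fixed second focus of $\mu$, the identity $X_2=X_4/3$ shows the barycenter is stationary, giving the first assertion. For the second assertion I would note that the major axis of $\mu$ is, by definition, the line through its two foci $X_3=O$ and $X_4$; since $X_2=X_4/3$ is a scalar multiple of $X_4$ it lies on the line $OX_4$, i.e.\ on the major axis. Equivalently, from $X_2=(2X_3+X_4)/3$ and $X_5=(X_3+X_4)/2$ one computes $X_2-X_5=(X_3-X_4)/6$, which is parallel to the focal direction $X_4-X_3$, so $X_2$ lies on the focal axis through the center $X_5$.

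I expect the genuine obstacle to be the middle step, namely verifying that the common circumscribed conic is the MacBeath inconic of each triangle and not only of the seed. The isogonal-conjugate characterisation makes this clean, but if one wishes to avoid that theorem, an alternative is to establish directly that $A+B+C$ is a Poncelet invariant of the family, e.g.\ by parametrising the vertices on the circle through the tangency condition with $\mu$ and checking that $A+B+C$ has vanishing derivative along the family; this route is more computational, and I would fall back on it only if the synthetic argument needs shoring up.
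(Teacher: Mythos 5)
Your proof is correct and follows essentially the same route as the paper's: both arguments rest on the caustic's foci being the stationary circumcenter $X_3$ and orthocenter $X_4$, so that $X_2$, dividing the segment $X_3X_4$ in ratio $1:2$, is fixed and automatically lies on the focal (major) axis. You additionally make explicit a step the paper leaves implicit—that the fixed conic is the MacBeath inconic of \emph{every} triangle of the family, via the isogonal-conjugacy of inconic foci \cite{beluhov2007-isogonal-foci}—a fact the paper itself only invokes later, in the proof of \cref{prop:affine-macbeath}.
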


\begin{proof}
The foci of the caustic are the circumcenter $X_3$ and the orthocenter $X_4$, therefore the barycenter $X_2$ must be fixed, $|X_3 X_2|=|X_3 X_4|/3$ \cite[Triangle Centroid, eqn.(13)]{mw}. Consider also that MacBeath family are the intouch triangles of bicentric polygons (Chapple's porism). The the barycenter $X_2$ of the intouch triangle is the Weill point $X_{354}$ of the reference \cite{etc}, shown to be stationary over the bicentric family \cite[Thm.~4.1]{odehnal2011-poristic}. Since the barycenter $X_2$ lies on the Euler line, which also contains $X_3,X_4$, it must lie on the inconic's axis.
\end{proof}


\begin{definition}[Isogonal conjugate]
\label{def:isog}
The isogonal conjugate $P^{\dagger}$ of a point $P$ with respect to a triangle $T$ is the point of concurrence of the cevians of $P$ reflected upon the angle bisectors. If the barycentrics of $P$ are $[z_1:z_2:z_3]$,  $P^{\dagger}=[l_1^2/z_1 : l_2^2/z_2 : l_3^2/z_3]$, where the $l_i$ are the sidelengths.
\end{definition}

\begin{proposition}
\label{prop:affine-macbeath}
Given an outer ellipse $\E$ with axes $(a,b)$ centered on $O$ and a point $O_c$ in the interior of a concentric, axis-aligned ellipse with axes $(a/2, b/2)$, there is a caustic $\K$ which admits a Poncelet triangle family. Over this family, the barycenter $X_2$ is stationary, with the latter lying on $O O_c$.
\end{proposition}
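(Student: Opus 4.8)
The plan is to deduce the statement from the MacBeath family of \cref{lem:x2-macbeath} by an affine change of coordinates, exploiting that affine maps preserve Poncelet pairs, centroids, and collinearity through the common center $O$. First I would set up the circular model: place the MacBeath circumcircle $\Gamma$ at the origin $O=X_3$ with radius $R$, and recall that a triangle inscribed in a circle centered at $O$ has orthocenter equal to the sum of its vertex vectors. Consequently, fixing the orthocenter $X_4=\vec d$ is exactly what renders the family Poncelet with a stationary barycenter, and on the Euler line one has the collinear points $X_3=O$, $X_2=\vec d/3$, $X_5=\vec d/2$, $X_4=\vec d$, where $X_5$ is the center of the MacBeath inconic (the caustic). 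The construction of \cref{lem:x2-macbeath} applies to any interior choice of orthocenter, and the admissible region for $\vec d$ is the open disk $|\vec d|<R$: by Euler's relation $OH^{2}=R^{2}\,(1-8\cos A\cos B\cos C)$ one has $OH<R$ precisely for acute triangles, with $|\vec d|=R$ the degenerate right-triangle boundary.

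Next I would transport everything by the linear map $M=D\,R_\psi$, where $D=\mathrm{diag}(a/R,\,b/R)$ and $R_\psi$ is an arbitrary rotation; since $\Gamma$ is rotation-invariant, $M(\Gamma)=\E$ for every $\psi$. Affine images of a Poncelet pair are again a Poncelet pair, so the images of the MacBeath triangles are inscribed in $\E$ and circumscribe the caustic $\K:=M(\text{MacBeath inconic})$, centered at $O_c:=M(X_5)=M\vec d/2$. Because $M$ is linear and preserves centroids, the barycenter of the image family is the fixed point $M(X_2)=M\vec d/3=\tfrac23 O_c$; as $M\vec d/3$ and $O_c=M\vec d/2$ are both positive multiples of the single vector $M\vec d$, the stationary barycenter lies on the segment $O\,O_c$, which is the collinearity claim.

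Finally, to match the admissible regions I would observe that as $\vec d$ ranges over the open disk $|\vec d|<R$, the point $O_c=D\,R_\psi\vec d/2$ ranges exactly over the interior of the ellipse with semi-axes $(a/2,b/2)$: $R_\psi$ preserves the disk, $D$ carries the disk of radius $R$ onto the interior of the $(a,b)$-ellipse, and the factor $1/2$ rescales this to $(a/2,b/2)$. Conversely, given $O_c$ interior to the half-ellipse, the choice $\vec d=2\,R_\psi^{-1}D^{-1}O_c$ satisfies $|\vec d|<R$ and reproduces it, so the required caustic $\K$ exists. The rotation $R_\psi$ is not needed for existence—$\psi=0$ already covers every $O_c$—but it supplies a one-parameter family of caustics with the same center $O_c$ and differing orientation.

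The step I expect to be the main obstacle is this last matching: pinning down that the admissible orthocenter locus is precisely (or at least contains) the open disk of radius $R$, and that its image under $M$ fills exactly the open $(a/2,b/2)$-ellipse, so that the hypothesis ``$O_c$ interior to the half-ellipse'' is both necessary and sufficient. Everything else—invariance of the Poncelet property, of the stationary centroid, and of collinearity through $O$—is a formal consequence of the linearity of $M$ together with \cref{lem:x2-macbeath}.
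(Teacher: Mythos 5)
Your proof is correct and takes essentially the same approach as the paper: reduce to the MacBeath family of \cref{lem:x2-macbeath} via an affine map between $\E$ and a circle, then use affine equivariance of the centroid and preservation of collinearity to transport the stationary barycenter onto the line $O\,O_c$. The only differences are cosmetic: you transport from the circle to the ellipse and verify the correspondence of admissible regions explicitly (via the vector characterization of the orthocenter and Euler's inequality), whereas the paper maps the ellipse to a circle and identifies the caustic there as the MacBeath inconic via isogonal conjugacy of inconic foci.
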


\begin{proof}
Referring to \cref{fig:x2-macbeath}, let $\mathcal{A}$ be an affine transform that sends an outer Poncelet conic $\E$ centered at $O$ (left) to a circle $\E'$ (right), centered at $O'=\mathcal{A}(O)$ (conic centers are equivariant under affinities). Since $O_c$ is as specified, $O_c'=\mathcal{A}(O_c)$ will lie within a circle concentric with $\E'$ and of half its radius.

Let $\K'$ be the unique Poncelet caustic with a first focus at $O'$ (the circumcenter $X_3$ of its triangles) and the other at the reflection of $\O'$ about $O_c'=\mathcal{A}(O_c)$. Since foci of a triangle's inconic are an isogonal conjugate pair \cite{beluhov2007-isogonal-foci} (see also \cref{def:isog}), the second focus must be the fixed orthocenter $X_4$ of the family (also within $\E'$), i.e., this is the MacBeath inconic. 

Let $\K$ be the image of $\K'$ under $\mathcal{A}^{-1}$. The the barycenter $X_2$ of the family in $(\E,\K)$ will be stationary because (i) by \cref{lem:x2-macbeath}, the MacBeath's centroid is stationary, and (ii) the centroid is the only triangle center which is equivariant with respect to affinities \cite{etc}. 
The barycenter $X_2$ must lie on $O C$ because affinities preserve collinearities. Notice it will nevertheless not lie on the major axis of $\K$ (foci are not equivariant).
\end{proof}

\begin{figure}
\centering
\includegraphics[width=\linewidth]{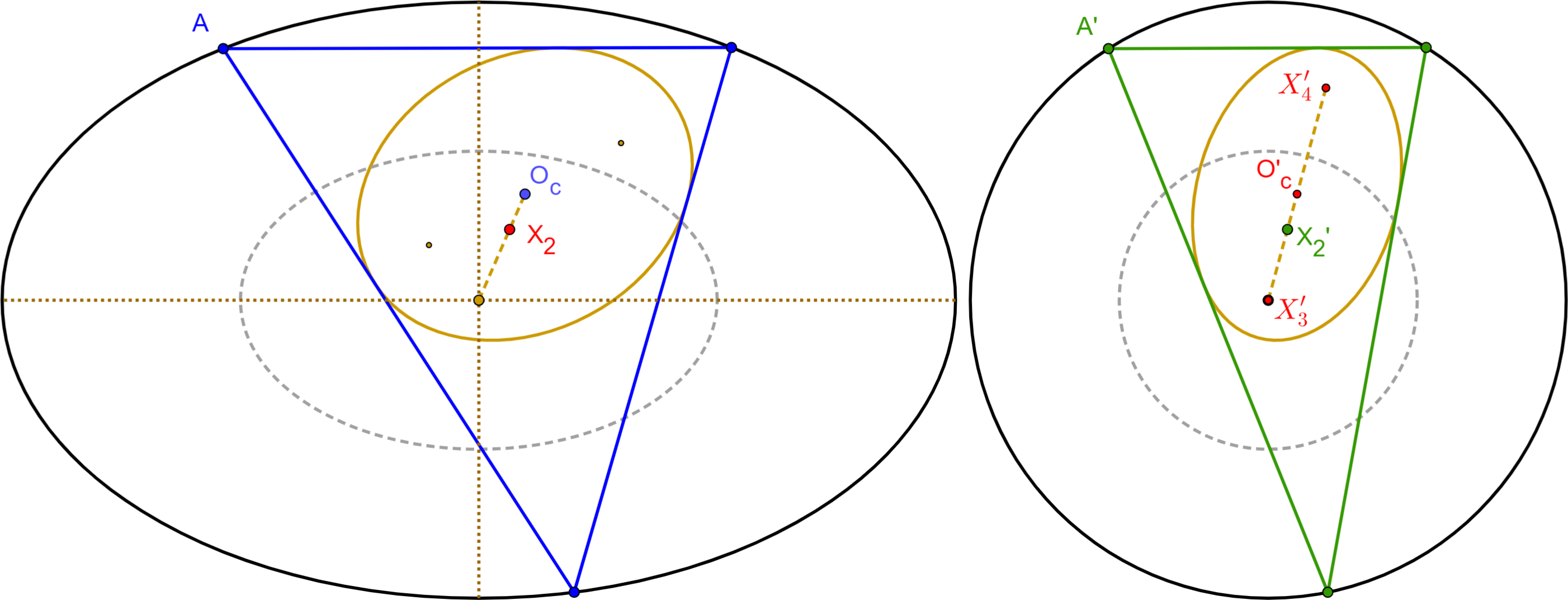}
\caption{Constructions for the proof of \cref{prop:affine-macbeath}: an affine transformation exists such that the outer ellipse (right) is sent to a circle (right). The dashed half-size ellipse in the left (half radius circle on the right) delimits the region for acceptable $O_c$ ($O_c'$ on the right). Video: \url{https://youtu.be/yDz\_XopFw2Y}}
\label{fig:x2-macbeath}
\end{figure}

Referring to \cref{fig:macbeath-n45}, experimental evidence suggests that for MacBeath-like Poncelet families of $n$-gons, $n>3$, i.e., they are circle-inscribed and the caustic has a focus at the center of said circle, both vertex and area centroids $C_0$ and $C_2$ remain stationary on the caustic axis. When $n=4$, $C_0$ lies at the caustic's center. 

\begin{corollary}
Given an ellipse $E$ centered at $O$ and a point $C$ in its interior, there is a caustic centered on $C$ for which both $c_0$ and $c_2$ are stationary on $O C$. 
\end{corollary}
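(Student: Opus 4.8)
The plan is to run the affine-transfer argument of \cref{prop:affine-macbeath} almost verbatim: perform the focus-at-center construction in the circular picture, where the experimental observation applies, and then pull back only the affine-invariant conclusions. First I would choose an affine map $\A$ carrying the outer ellipse $E$ to a circle $E'$, and set $O'=\A(O)$ (a center, hence equivariant) and $C'=\A(C)$, still interior to $E'$. The target in the circular picture is a Poncelet $n$-gon caustic $\K'$ inscribed in $E'$, having one focus at the center $O'$ and having its own center at $C'$; by the stated experimental observation such a family keeps both the vertex centroid $c_0$ and the area centroid $c_2$ stationary on the caustic's axis, which is the line $O'C'$.

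The single constructive step is to realize $\K'$ with its center pinned at $C'$. Pinning the center at $C'$ and a focus at $O'$ forces the major axis along $O'C'$, the focal half-distance to equal $|O'C'|$, and the second focus to be $2C'-O'$; the semi-major axis is then the only free parameter. I would let it grow from the degenerate focal segment up to the size at which the inconic first meets $E'$: along this one-parameter sweep the Poncelet rotation number varies continuously, so the usual intermediate-value argument for porisms produces a value at which the $n$-gon closes, giving the desired $\K'$.

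To finish I would pull back by $\A^{-1}$ and discard every non-invariant feature. With $\K=\A^{-1}(\K')$, equivariance of conic centers puts the center of $\K$ at $\A^{-1}(C')=C$. Both centroids are affine-equivariant --- $c_0$ because $\A$ commutes with averaging of the vertices, and $c_2$ because the uniform-density center of mass of a region commutes with $\A$ (in the change-of-variables integral the Jacobian factors cancel between the moment and the area) --- so each stays stationary for the family $(E,\K)$. Collinearity is preserved by $\A^{-1}$, so both centroids lie on the image of $O'C'$, namely the line $OC$; for $n=4$ the further remark that $c_0$ sits at the caustic center likewise transfers, placing $c_0$ at $C$.

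I expect the real content to lie in two spots. The affine-equivariance of the \emph{area} centroid $c_2$ is the only genuinely new ingredient beyond \cref{prop:affine-macbeath} (there $X_2$ was the sole equivariant triangle center, whereas for $n$-gons one must separately justify $c_2$ as a center of mass), though the change-of-variables check above settles it. The harder point is the existence step: for the focus-at-$O'$ inconic to fit inside $E'$ one needs $a_{\K'}+|O'C'|\le R$ together with $a_{\K'}>|O'C'|$, which is possible only when $|O'C'|<R/2$. Thus the honest admissible locus for $C$ is the interior of the concentric half-size ellipse --- exactly the region that bounded $O_c$ in \cref{prop:affine-macbeath} --- and I would state the corollary with that restriction. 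As there, no focal statement survives downstairs: foci are not affine-equivariant, so $\K$ carries no distinguished focus at $O$.
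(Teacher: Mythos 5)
Your argument is the paper's own, made explicit: the paper offers no proof of this corollary, presenting it as an immediate consequence of the experimental observation about MacBeath-like $n$-gon families together with the affine-transfer device of \cref{prop:affine-macbeath}, and that is precisely what you execute (map $E$ to a circle, build the focus-at-center caustic there, invoke the observation, pull back centers, centroids, and collinearity, and discard focal data). Like the paper, your proof is conditional on the experimental observation itself, which remains unproven; but your reduction to it --- including the intermediate-value existence step for the caustic and the affine-equivariance check for both the vertex and area centroids --- is sound and fills in details the paper leaves implicit.

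Your closing paragraph in fact catches a flaw in the paper's statement. Since the construction pins a focus of the caustic at $O'$ and its center at $C'$, one needs $|O'C'| < a_{\K'} < R - |O'C'|$, which is possible only when $|O'C'| < R/2$; pulled back by the affinity, $C$ must lie inside the concentric half-size ellipse --- exactly the region that delimits $O_c$ in \cref{prop:affine-macbeath}. The corollary as printed allows any $C$ interior to $E$, and with that hypothesis it does not follow from the observation by this (or the paper's implicit) argument; your restricted version is the statement that is actually established.
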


Indeed, over such families, the perimeter centroid, in general not expected to sweep a conic over Poncelet \cite{sergei2016-com}, is experimentally found to sweep an ellipse with major axis identical to the caustic's. 

\begin{figure}
\centering
\includegraphics[width=\linewidth]{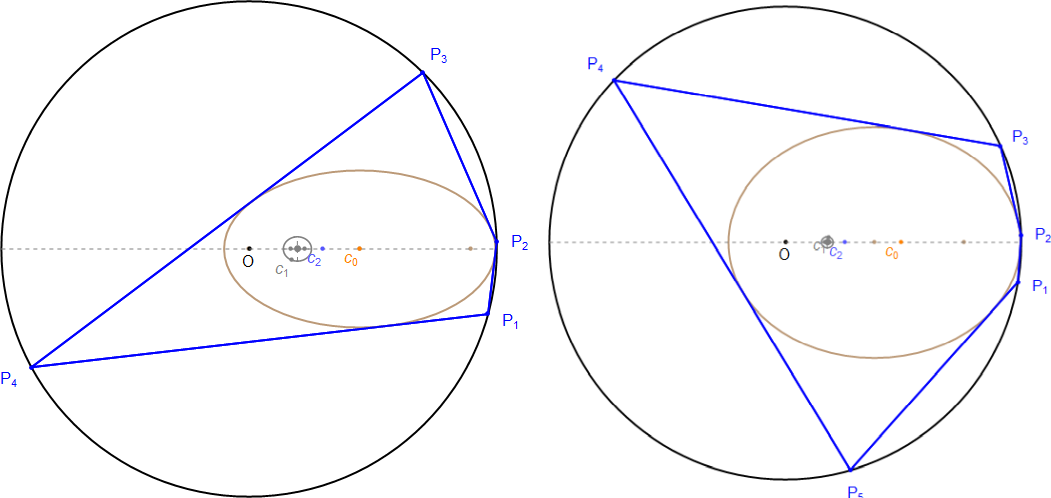}
\caption{MacBeath-like Poncelet families of quadrilaterals (left) and pentagons (right). In both cases The vertex and area centroids $C_0,C_2$ are stationary on the caustic's major axis, whereas the perimeter centroid $C_1$ sweeps an ellipse. In the former case, $C_0$ is located at the center of the caustic. Video: \hrefs{https://youtu.be/e-iOAAFU1H4}}
\label{fig:macbeath-n45}
\end{figure}

\section{\torp{Focal-$X_4$}{Focal-X(4)}}

Referring to \cref{fig:four-families} (bottom left):

\begin{proposition}
The orthocenter $X_4$ is stationary at a focus $f=[\pm c,0]$ of $E$ for the circular caustic with center and radius given by:
\[ C_4 =\left[\pm\frac{a^2 c}{2 a^2-c^2},0\right],\,\,\, r_4 = \frac{a(a^2-c^2)}{2 a^2-c^2}\rd \]
\end{proposition}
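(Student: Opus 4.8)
The plan is to fix the caustic center on the major axis, $C_4=[x_c,0]$, determine $x_c$ from a single clean sub-computation, and then read off the radius $r_4$ immediately from the closure formula of the Focal-$X_1$ proposition evaluated at $y_c=0$, namely $r=(b\sqrt{a^4-c^2x_c^2}-ab^2)/c^2$. Placing $C_4$ on the major axis is forced by symmetry: we want the fixed orthocenter at the focus $[\pm c,0]$, which lies on that axis, and for the whole configuration (ellipse together with caustic) to be reflection-symmetric we need $y_c=0$. Under this symmetry the Poncelet family is itself invariant under reflection in the major axis, so it contains the two ``isosceles'' members whose axis of symmetry is the major axis, and I would extract $x_c$ from these two members alone.

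Each symmetric member has its apex at an ellipse vertex $[\pm a,0]$ and a vertical base, tangent to the caustic, at $x=x_0=x_c\mp r$. The orthocenter of such an isosceles triangle lies on the major axis, and writing the single altitude condition together with $y_0^2=b^2(a^2-x_0^2)/a^2$ gives its abscissa in closed form; for the two members these are $h_{+}=x_0^{(1)}\tfrac{a^2+b^2}{a^2}+\tfrac{b^2}{a}$ and $h_{-}=x_0^{(2)}\tfrac{a^2+b^2}{a^2}-\tfrac{b^2}{a}$, with $x_0^{(1)}=x_c-r$ and $x_0^{(2)}=x_c+r$. Since these two members realize the extreme points of the orthocenter's locus along the axis, stationarity requires $h_{+}=h_{-}$; this collapses to $r=ab^2/(a^2+b^2)=a(a^2-c^2)/(2a^2-c^2)$, which is exactly $r_4$, and substituting back shows the common value is $h_{\pm}=c$, i.e. the focus. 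Feeding $r=ab^2/(a^2+b^2)$ into the closure formula then yields $x_c=\pm a^2c/(2a^2-c^2)$, completing the stated $C_4$ and $r_4$.

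The step above pins down $(x_c,r)$ and the focal location, but so far it only certifies that the locus of $X_4$ has degenerated in the horizontal direction; the remaining obstacle --- and the genuinely delicate point --- is to upgrade this to full stationarity. The clean way is to use that $X_4$ traces an ellipse over the family: a conic symmetric about the major axis that meets that axis in the single (double) point $[c,0]$ is either that point or a degenerate vertical segment through it, and the segment alternative would contradict $X_4$ being constant. I would rule the segment out either by invoking the explicit $X_4$-locus of \cite{garcia2024-incircle} (as is done for $X_2$ in the Iso-$X_2$ section), or, in a self-contained treatment, by one extra evaluation of $X_4$ at a generic member showing that its ordinate vanishes. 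That generic evaluation is where the real work lies: parametrizing one vertex on $\E$, completing the triangle by the two caustic-tangent lines and the closure condition, and computing the orthocenter via $X_4=3X_2-2X_3$ produces bulky radical expressions, and the crux is choosing a parametrization in which the constancy of $X_4$ reduces to a tractable polynomial identity.
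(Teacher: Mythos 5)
Your first stage checks out and is a genuinely elementary route to the formulas: the two members symmetric about the major axis have apexes $[\pm a,0]$ and vertical bases at $x=x_c\mp r$, their orthocenter abscissas are $h_{\pm}=(x_c\mp r)\frac{a^2+b^2}{a^2}\pm\frac{b^2}{a}$, and $h_+=h_-$ forces $r=\frac{ab^2}{a^2+b^2}=\frac{a(a^2-c^2)}{2a^2-c^2}$; the closure formula at $y_c=0$ then gives $x_c=\pm\frac{a^2c}{a^2+b^2}=\pm\frac{a^2c}{2a^2-c^2}$, whence $h_{\pm}=x_c\frac{a^2+b^2}{a^2}=\pm c$. (One expository slip: you announce $h_{\pm}=c$ before invoking the closure formula, but $h_{\pm}$ depends on $x_c$, so the closure step must come first.) For comparison, the paper states this proposition with no proof at all; its evident intent, in parallel with its iso-$X_2$ section, is to read the result off the explicit elliptic locus of $X_4$ computed in \cite{garcia2024-incircle}.

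The genuine gap is your upgrade to stationarity, and it is more than a deferred computation: the dichotomy you offer does not follow from what you proved. You have shown only that the two symmetric members share the orthocenter $[\pm c,0]$; this does not show that the locus of $X_4$ meets the major axis in a single double point, nor that these two members ``realize the extreme points of the orthocenter's locus along the axis.'' Reflection symmetry forces mirror members to have mirror orthocenters, so any \emph{non-symmetric} member whose orthocenter happens to land on the axis shares that landing point with its mirror image; a non-degenerate locus ellipse passing through $[\pm c,0]$ and crossing the axis a second time at a point supplied by such a mirror pair (the family then winding twice around the locus) is consistent with everything you established, since the family-to-locus map is not known to be injective. Hence the conic argument cannot close the proof by itself, and the burden falls entirely on one of your two fallbacks. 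Citing the explicit center and semi-axes of the $X_4$-locus from \cite{garcia2024-incircle} would indeed finish it (one checks both semi-axes vanish exactly at your $(x_c,r)$), but then your symmetric-member analysis becomes redundant, since the degenerate locus by itself determines $C_4$, $r_4$ and the stationary point; alternatively, the generic-member computation you describe is the actual content of the proposition, and it is not performed. As written, the proposal verifies the stated formulas and shows this caustic is the only major-axis-centered candidate, but it does not prove that $X_4$ is stationary.
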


Referring to \cref{fig:focal-x4}:

\begin{observation}
For the focal-$X_4$ family, the center (resp. other focus) of $\E$ is $X_{7952}$ (resp. $X_{18283}$). $\L_3$ is an ellipse with a focus at $E$'s center. $\L_{20}$ is twice as large, with a focus on $\E$'s distal focus.
\end{observation}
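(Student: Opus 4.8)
The plan is to separate the four assertions into two kinds: the two triangle-center identifications ($X_{7952}$ and $X_{18283}$) and the two locus statements (about $\L_3$ and $\L_{20}$), with the $\L_{20}$ claim obtained as a free consequence of the $\L_3$ claim via the de Longchamps homothety. Throughout I would fix the stationary orthocenter at $f=[c,0]$, so that $[0,0]$ is $\E$'s center and $[-c,0]$ is its distal focus (distal relative to the caustic center $C_4$, which lies on the same side as $f$).

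For the identifications, the key observation is that over this family the outer ellipse $\E$ and the stationary focus $f$ are fixed in the lab frame, hence the center $[0,0]$ and the distal focus $[-c,0]$ are stationary points rigidly attached to each moving triangle. To name them, I would first parametrize the Poncelet family (say by the contact parameter of one vertex on the caustic of the previous proposition), extract the sidelengths $l_i$ and the vertex coordinates, and express each fixed point in barycentric coordinates $[\,z_1:z_2:z_3\,]$ as a function of the $l_i$. Because the configuration is symmetric about the major axis and the vertex labeling is consistent, these coordinates come out as genuine center functions (symmetric and homogeneous), so each point is a bona fide triangle center. Evaluating the first barycentric on Kimberling's reference triangle and searching \cite{etc} then pins down the indices, which I expect to be $X_{7952}$ for the center and $X_{18283}$ for the distal focus. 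This step is conceptually routine but computationally the heaviest, since it requires inverting the parametrization to write a lab-frame point as a symmetric function of sidelengths.

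For $\L_3$, I would compute the circumcenter $X_3$ as a function of the family parameter. Invoking the companion results of \cite{garcia2024-incircle}, where triangle centers over the incircle family are shown to sweep ellipses, it then suffices to determine the resulting conic and verify that one of its foci sits at $[0,0]$; equivalently, one evaluates $X_3$ at four positions, fits the ellipse, and checks its focal distance. The geometrically meaningful content, and the crux of the locus part, is precisely this focus-at-center claim for $\L_3$.

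Finally, $\L_{20}$ follows with no further computation. The de Longchamps point is the reflection of the orthocenter in the circumcenter, so $X_{20}=2X_3-X_4$; since $X_4=f$ is fixed, the assignment $X_3\mapsto X_{20}$ is the homothety $h$ centered at $f$ with ratio $2$, whence $\L_{20}=h(\L_3)$ is an ellipse twice as large as $\L_3$. Homotheties carry foci to foci, and the focus of $\L_3$ at $\E$'s center maps to $h([0,0])=2[0,0]-f=[-c,0]$, that is, to $\E$'s distal focus, establishing the last assertion. The main obstacle is thus concentrated in the two middle steps: the symbolic identification of the ETC indices and the verification that $\L_3$'s focus lands exactly at the center of $\E$.
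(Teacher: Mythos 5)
The paper offers no proof of this statement: it is an \emph{Observation}, recorded as an experimental finding, and the only reasoning the paper supplies is tucked into the caption of \cref{fig:focal-x4}, where the claim that $\L_{20}$ is ``twice as big'' with a focus on $\E$'s other focus is attributed to $X_{20}$ being the reflection of $X_4$ in $X_3$. Your decomposition is sound and goes further than the paper does: your homothety argument --- since $X_4=f$ is stationary, $X_{20}=2X_3-X_4$ is the image of $X_3$ under the homothety $h$ centered at $f$ with ratio $2$, so $\L_{20}=h(\L_3)$ is twice as large and the focus of $\L_3$ at $\E$'s center maps to $-f$, the distal focus --- is precisely the reasoning the paper leaves implicit, and it is complete modulo the $\L_3$ claim.

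One step of your plan would fail as stated, namely the ETC identifications. The focal-$X_4$ family only determines the fixed points (center and distal focus of $\E$) as functions on a \emph{one-parameter} set of triangle shapes; this does not single out a center function on all triangles, so you cannot ``evaluate the first barycentric on Kimberling's reference triangle'': the $(6,9,13)$ reference triangle is not a member of the family, and infinitely many distinct center functions agree along the family's one-parameter shape curve. The workable direction is the reverse: take the published barycentric formulas for $X_{7952}$ and $X_{18283}$ from \cite{etc}, evaluate them symbolically on a parametrization of the family, and verify they sit at $[0,0]$ and $[-c,0]$ identically in the family parameter. A similar remark applies to $\L_3$: fitting a conic to four sampled positions of $X_3$ is only legitimate because symmetry forces the conic's axis onto the major axis of $\E$ (a general conic needs five points), and a proof still requires checking that $X_3$ satisfies the fitted conic's equation identically in the parameter, not merely at samples, before reading off the focus. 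With those two corrections your outline becomes an actual proof, which is more than the paper itself provides for this statement.
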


\begin{figure}
\centering
\includegraphics[width=0.7\linewidth]{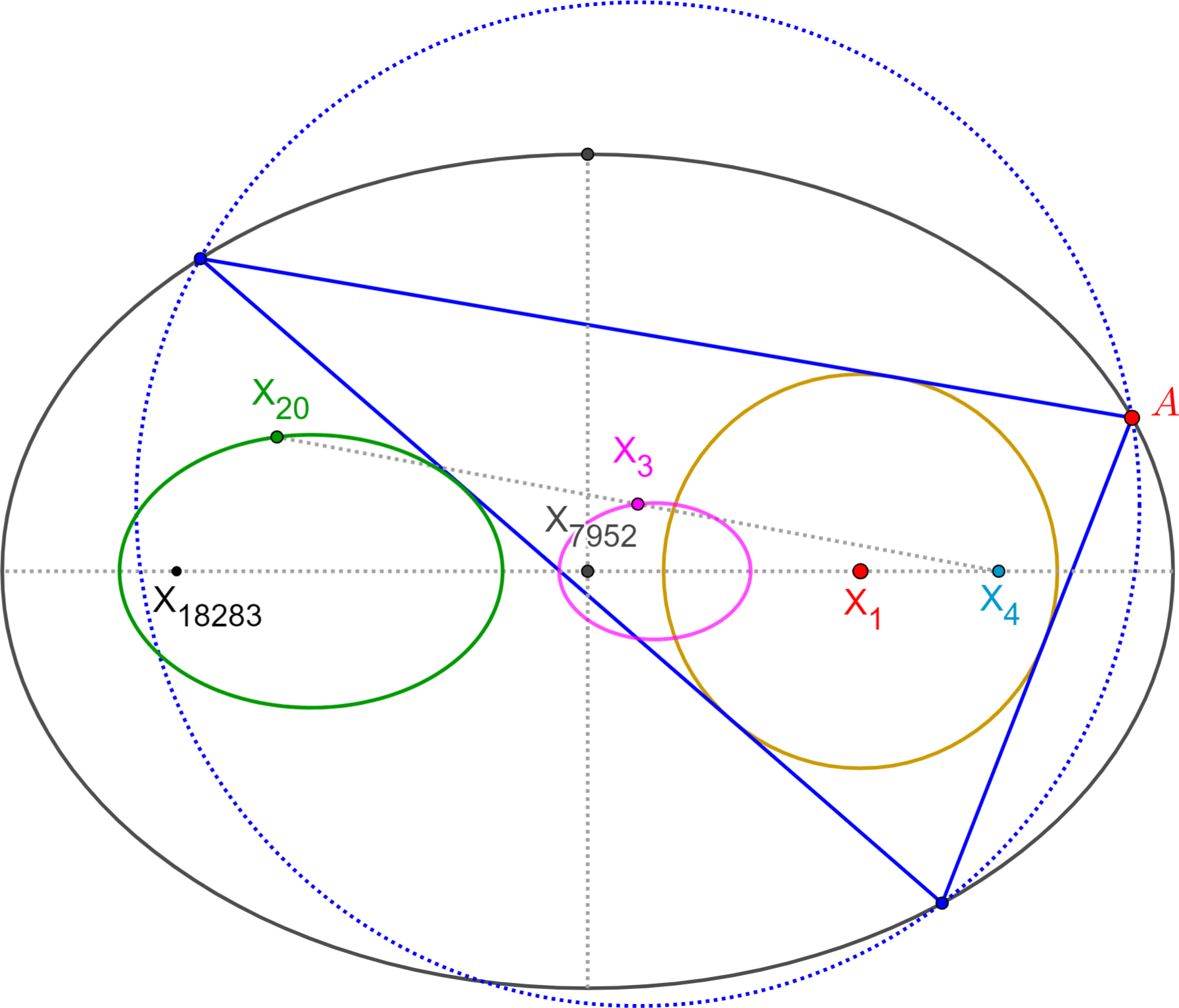}
\caption{The focal-$X_4$ family: the outer ellipse $E$ has foci on the orthocenter $X_4$ and $X_{18283}$ and center on $X_{7952}$. Also shown are the loci of the circumcenter $X_3$, with a focus on the latter center, and that of the de Longchamps point $X_{20}$ (a reflection of the orthocenter $X_4$ on the circumcenter $X_3$), so twice as big, and with a focus on the other focus of $E$. Video: \hrefs{https://youtu.be/x9ce56vvDNI}}
\label{fig:focal-x4}
\end{figure}

\begin{definition}
A triangle's \textit{polar circle} is centered on the orthocenter $X_4$ and has squared radius given by \cite[Polar circle]{mw}:
\[r_{pol}^2=4R^2-\frac{\sum{l_i^2}}{2} \rd\]
This quantity is positive (resp. negative) for obtuse (resp. acute triangles).
\end{definition}

\begin{proposition}
The focal-$X_4$ family conserves the (negative) squared radius of its polar circle at
$r^2_{pol} = -b^4/(a^2 + b^2).$
\end{proposition}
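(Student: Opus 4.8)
The plan is to first reduce $r_{pol}^2$ to a form that exploits the two quantities this family holds fixed: the inradius and the location of the orthocenter. Starting from the definition $r_{pol}^2 = 4R^2 - \tfrac12\sum l_i^2$ and substituting the standard Euler identity $|X_3X_4|^2 = 9R^2 - \sum l_i^2$ (see \cite{mw}), I obtain
\[ r_{pol}^2 = 4R^2 - \tfrac12\left(9R^2 - |X_3 X_4|^2\right) = \tfrac12\left(|X_3 X_4|^2 - R^2\right), \]
so that $r_{pol}^2$ is exactly half the power of the orthocenter $X_4$ with respect to the circumcircle $\Gamma$. Since in the focal-$X_4$ family $X_4$ is pinned at the focus $f=[c,0]$, the quantity $|X_3 X_4|^2 - R^2 = \Gamma(f)$ is the power of a \emph{single fixed point} $f$ with respect to the moving circumcircle. (Equivalently, via $\sum l_i^2 = 2s^2 - 2r^2 - 8Rr$ one finds $r_{pol}^2 = (2R+r)^2 - s^2$, which exhibits the claim as a statement about how $R$ and $s$ co-vary while $r=r_4$ stays fixed.)

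Next I would compute this power explicitly. Writing the vertices as $V_i=[a\cos t_i,\, b\sin t_i]$ on $\E$, I would invoke the classical criterion that four points of an ellipse with eccentric angles $t_1,\dots,t_4$ are concyclic iff $t_1+t_2+t_3+t_4\equiv 0 \pmod{2\pi}$; applied to $\Gamma$, the fourth intersection $Q = (\Gamma\cap\E)\setminus\{V_1,V_2,V_3\}$ then has eccentric angle $t_4=-(t_1+t_2+t_3)$. Substituting $[a\cos t,\,b\sin t]$ into $\Gamma\colon x^2+y^2+Dx+Ey+F=0$ and matching symmetric functions of the $z_j=e^{it_j}$ yields $D,E,F$ in closed form in terms of $\sum\cos t_j$, $\sum\sin t_j$, and $\sum_{i<j} z_iz_j$, whence $\Gamma(f)=c^2+Dc+F$ becomes a rational expression in these symmetric functions. (Alternatively, one can bypass this by inserting the explicit circumradius and the circumcenter locus $\L_3$ already available from \cite{garcia2024-incircle} and forming $\tfrac12(|X_3-f|^2-R^2)$ directly.)

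The final and hardest step is to impose the fixed-incircle constraint and show the parameter drops out. The three sides must be tangent to the caustic of center $C_4$ and radius $r_4$; encoding this tangency together with Poncelet closure gives the relations among $t_1,t_2,t_3$ that feed into the symmetric functions above, and the goal is to verify that $\Gamma(f)$ collapses to a constant. I expect this elimination to be the main obstacle: the tangency conditions are not symmetric in an obvious way, so establishing the cancellation to a parameter-free value is where the real work lies (most naturally a computer-algebra elimination). Once constancy is secured, the value is cheaply pinned down by evaluating at the symmetric, isosceles member of the family — the triangle with apex on the major axis and base perpendicular to it — where $R$ and the $l_i$ are elementary; this should return $-b^4/(a^2+b^2)$ and complete the proof.
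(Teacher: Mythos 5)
Your reduction $r_{pol}^2=\tfrac12\left(|X_3X_4|^2-R^2\right)$ is correct, but it expresses the invariant through the two quantities that are \emph{not} conserved in this family: the circumcenter $X_3$ moves and the circumradius $R$ varies, so the constancy of the power of the fixed focus with respect to the moving circumcircle is precisely the content of the proposition --- and your proposal leaves exactly that step as an unperformed elimination, which you yourself flag as ``the main obstacle.'' As written this is a plan, not a proof: nothing certifies that the tangency-plus-closure elimination collapses to a constant, and that computation is the genuinely hard route here. (Your identity is, however, the right tool one proposition later in the paper, for the MacBeath and dual families, where $X_3$, $X_4$ and $R$ really are all stationary.)

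The idea you are missing is that the focal-$X_4$ family has a \emph{fixed incircle}, hence a stationary incenter $X_1=C_4$ and a fixed inradius $r=r_4$. The paper therefore uses the incenter--orthocenter identity \cite[Incenter, eqn.6]{mw}
\[ |X_1-X_4|^2=2r^2+4R^2-\frac{\sum l_i^2}{2}=2r^2+r_{pol}^2, \]
so $r_{pol}^2=|X_1-X_4|^2-2r^2$ is constant with essentially no computation: every quantity on the right is stationary over the family. The explicit value then follows by direct substitution rather than elimination: with $X_4=[c,0]$, $X_1=C_4=\left[a^2c/(2a^2-c^2),0\right]$ and $r_4=ab^2/(2a^2-c^2)$, one gets $|X_1-X_4|=cb^2/(2a^2-c^2)$, hence
\[ r_{pol}^2=\frac{b^4(c^2-2a^2)}{(2a^2-c^2)^2}=-\frac{b^4}{2a^2-c^2}=-\frac{b^4}{a^2+b^2}, \]
using $c^2=a^2-b^2$. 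Your closing idea (pinning down the value at the isosceles member of the family) is perfectly sound once constancy is known, but with the right identity neither constancy nor the value needs it.
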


\begin{proof}
The incenter-orthocenter squared distance is given by \cite[Incenter, eqn.6]{mw}:
\begin{equation*}
|X_1-X_4|^2=2r^2+4R^2-\frac{\sum{l_i^2}}{2} = 2r^2+r_{pol}^2\, \cdot
\end{equation*}
Since for this family $|X_1 X_4|$ is fixed as is the inradius $r$. The expression was obtained by CAS-based simplification.
\end{proof}

Let $a,b$ denote the semiaxis' lengths of the MacBeath inconic.

\begin{lemma}
Over the MacBeath family, the sum of squared sidelengths, the sum of double-angle cosines, and the product of cosines are conserved. There are given by:
\begin{align*}
\sum{l_i^2} &= 9R^2-|X_3-X_4|^2 = 
32 a^2 + 4 b^2, \\
\sum{\cos(2\theta_i)} &= \frac{c^2-3a^2}{2a^2}\rc\\
\prod{\cos\theta_i} &=  \frac{\sum{l_i^2}}{8R^2}-1=\frac{b^2}{8a^2}\cdot
\end{align*}
\end{lemma}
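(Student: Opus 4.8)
The plan is to reduce all three conservations to a single observation: over the MacBeath family the circumcircle (radius $R$) and the inconic are both fixed, so $R$, the inconic semiaxes $a,b$, the foci $X_3,X_4$, and hence $|X_3-X_4|$ are all constant. I would then express each quantity in the statement as a function of $R$, $a$, $b$ alone, using two ingredients: the standard Euler identity $|X_3-X_4|^2 = 9R^2-\sum l_i^2$, and the sine rule $l_i = 2R\sin\theta_i$. Since the foci of the inconic are $X_3$ and $X_4$, their separation is the focal distance, giving $|X_3-X_4|^2 = 4c^2 = 4(a^2-b^2)$ immediately. The only genuinely geometric input that remains is the relation $R = 2a$.

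I would establish $R = 2a$ as follows. The pedal curve of an ellipse with respect to one of its foci is the auxiliary circle, i.e.\ the circle of radius $a$ concentric with the conic. Applying this to the focus $X_3$ and the three sides, which are tangent to the inconic, the feet of the perpendiculars from $X_3$ to the sides lie on the circle of radius $a$ centered at the inconic's center $X_5$. But $X_3$ is the circumcenter, so the foot of its perpendicular on each side is that side's midpoint; the three midpoints lie on the nine-point circle, which is also centered at $X_5$ and has radius $R/2$. Two concentric circles through the same three non-collinear points coincide, so $a = R/2$. This step is where I expect the main work to sit, since it is the lone non-formulaic claim; the alternative would be to quote the semiaxes of the MacBeath inconic directly from \cite{mw}.

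With $R^2 = 4a^2$ and $|X_3-X_4|^2 = 4(a^2-b^2)$ in hand, the three formulas follow by substitution. For the first, the Euler identity gives $\sum l_i^2 = 9R^2 - |X_3-X_4|^2 = 36a^2 - 4(a^2-b^2) = 32a^2+4b^2$. For the product of cosines, combining the sine rule with the identity $\sum\cos^2\theta_i = 1 - 2\prod\cos\theta_i$ yields $\sum l_i^2 = 8R^2\bigl(1+\prod\cos\theta_i\bigr)$, whence $\prod\cos\theta_i = \sum l_i^2/(8R^2) - 1 = b^2/(8a^2)$. For the double-angle sum, $\sum\cos(2\theta_i) = 3 - 2\sum\sin^2\theta_i = 3 - \sum l_i^2/(2R^2)$; substituting and rewriting $b^2 = a^2 - c^2$ gives $(c^2-3a^2)/(2a^2)$. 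Each right-hand side depends only on the fixed $a,b$, so all three are conserved, and the CAS-free derivation is complete once $R=2a$ is secured.
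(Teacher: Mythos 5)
Your proof is correct, and it differs from the paper's in a way worth spelling out. The paper's proof has the same skeleton for the first and third formulas---the Euler identity $|X_3-X_4|^2=9R^2-\sum l_i^2$ and the identity $\prod\cos\theta_i=\sum l_i^2/(8R^2)-1$, both cited from \cite{mw}, plus the observation that $R$ and the foci $X_3,X_4$ are fixed---but it obtains the explicit right-hand sides $32a^2+4b^2$, $(c^2-3a^2)/(2a^2)$, $b^2/(8a^2)$ by CAS simplification, and the whole second formula is attributed to CAS alone. What your argument adds is the one geometric fact the CAS step hides: $R=2a$. Your derivation of it is sound: the pedal curve of a conic with respect to a focus is its auxiliary circle (center $X_5$, radius $a$), while the feet of the perpendiculars from the circumcenter $X_3$ to the sides are the side midpoints, which lie on the nine-point circle (center $X_5$, radius $R/2$); since three non-collinear points determine a unique circle, $a=R/2$. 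Combined with $|X_3-X_4|=2c$ and the standard identities $\sum\cos^2\theta_i=1-2\prod\cos\theta_i$ and $\cos 2\theta_i=1-2\sin^2\theta_i$ (with $l_i=2R\sin\theta_i$), all three conserved quantities follow by pure substitution, and I verified each evaluation matches the stated values. So your route is CAS-free, self-contained, and strictly more informative: it isolates the classical fact that the MacBeath inconic's auxiliary circle is the nine-point circle, which the paper never states. One implicit assumption you share with the paper: that the second focus of the caustic is the orthocenter of \emph{every} triangle in the family (so that $|X_3-X_4|$ really is the focal distance of the fixed inconic); the paper justifies this elsewhere via isogonal conjugacy of inconic foci, so it is a shared premise rather than a gap in your argument.
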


\begin{proof}
The first relation is direct from \cite[Circumcenter, eqn.6]{mw}, noting that (i) the circumradius $R$ is fixed and (ii) the caustic foci $X_3,X_4$ are stationary. The second one is obtained via CAS-simplification. The third one is direct from \cite[Circumradius, eqn.5]{mw}.
\end{proof}

Referring to \cref{fig:macbeath-dual} (right): 

\begin{definition}
The \textit{dual} Poncelet family is interscribed between two dual ellipses \cite{walker1950-curves}. They are also homothetic if one is rotated by 90-degrees. 
\end{definition}

\begin{figure}
\centering
\includegraphics[width=.8\linewidth]{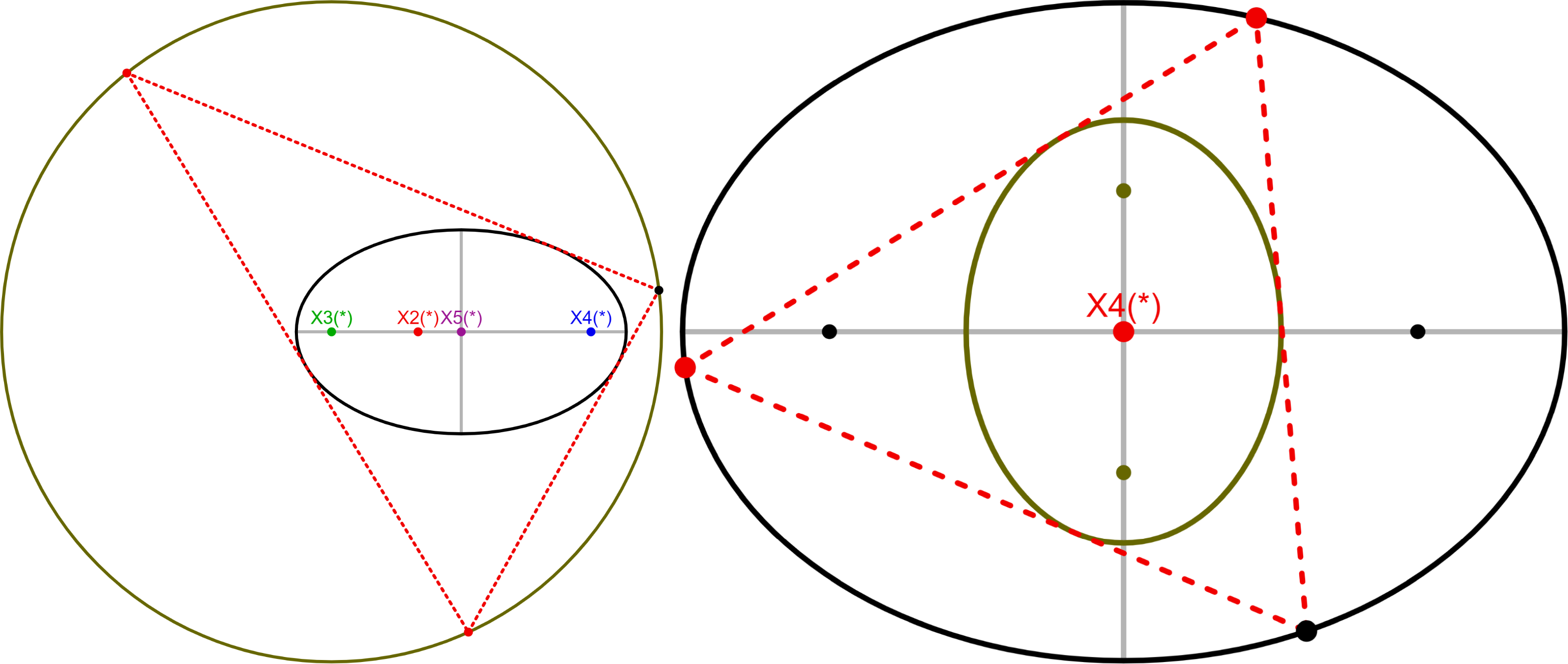}
\caption{\tb{left}: the `MacBeath' family. Live: \hrefs{https://bit.ly/4eVqLAS}; \tb{right}: the `dual' family. Live: \hrefs{https://bit.ly/4eSQSsd}}
\label{fig:macbeath-dual}
\end{figure}

\begin{proposition}
The (i) MacBeath and (ii) dual families conserve $r_{pol}^2<0$. These are given by:
\begin{align*}
r_{pol,macbeath}^2 & = \frac{|X_3-X_4|^2-R^2}{2}=
-2b^2 \rc \\
r_{pol,dual}^2 & = -\frac{a^2 b^2}{a^2 + b^2}\,\cdot 
\end{align*}
\end{proposition}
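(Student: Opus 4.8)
The plan is to reduce both assertions to a single metric identity for the polar radius and then, for each family, track only which scalars inside it are Poncelet-invariant. First I would record the general identity, valid for every triangle,
\[ r_{pol}^2 = 4R^2 - \frac{\sum l_i^2}{2} = \frac{|X_3-X_4|^2 - R^2}{2} = -4R^2\prod_i\cos\theta_i. \]
The middle form comes by substituting Euler's relation $|X_3-X_4|^2 = 9R^2 - \sum l_i^2$ (the circumcenter identity already invoked in the preceding lemma) into the polar-circle definition; the last form follows from $\sum l_i^2 = 8R^2(1+\prod_i\cos\theta_i)$, obtained from the law of sines together with $\sum\sin^2\theta_i = 2+2\prod_i\cos\theta_i$, which is exactly the relation written as $\prod_i\cos\theta_i = \sum l_i^2/(8R^2)-1$ in the preceding lemma. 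This collapses each case to exhibiting the pair $R$ and $|X_3-X_4|$ (equivalently $R^2\prod_i\cos\theta_i$) as conserved and then evaluating them.

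For the MacBeath family the conservation is immediate: the caustic is fixed and its foci are precisely the circumcenter $X_3$ and orthocenter $X_4$, so $|X_3-X_4|$ is constant, and since these triangles are inscribed in a circle, $R$ is constant as well. Hence $r_{pol}^2$ is conserved with no further argument, which is already the first displayed equality. To pin the value $-2b^2$ I would feed in the two relations of the preceding lemma: from $\sum l_i^2 = 9R^2-|X_3-X_4|^2 = 32a^2+4b^2$ and $\prod_i\cos\theta_i = b^2/(8a^2)$ one solves for $R^2 = 4a^2$, whence $|X_3-X_4|^2 = 9R^2-\sum l_i^2 = 4(a^2-b^2)$ and therefore $r_{pol}^2 = \tfrac12\big(4(a^2-b^2)-4a^2\big) = -2b^2$.

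The dual family is the genuinely harder case, because neither $R$ nor $X_4$ is fixed: the outer conic is an ellipse, so the circumradius of the inscribed triangles varies, and the orthocenter is no longer pinned to a focus of the caustic. Here I would set up the outer ellipse $\E:\,x^2/a^2+y^2/b^2=1$ together with its caustic $\K$, taken as the $90^{\circ}$-rotated concentric homothet of $\E$ dictated by the duality (semi-axes proportional to $(b,a)$), and fix the homothety scale by imposing the triangle Poncelet-closure (Cayley) condition. I would then parametrize the family by a single vertex parameter $t$, compute $R(t)$ and $|X_3(t)-X_4(t)|$ from the three vertices, and form $r_{pol}^2(t) = \tfrac12\big(|X_3(t)-X_4(t)|^2-R(t)^2\big)$ via the identity above.

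The main obstacle will be showing this expression is independent of $t$: unlike the MacBeath case nothing is manifestly constant, so I expect to rely on CAS simplification (as the paper does elsewhere), using the closure relation to eliminate the parameter dependence; a cleaner alternative, if available, would be to recognize $|X_3-X_4|^2-R^2$ as a power-of-a-point that is forced constant by the polar reciprocity between $\E$ and $\K$. Either way, the value of the constant I would extract by evaluating at the axis-symmetric (isosceles) member of the family, where the three vertices and hence $R$, $X_3$, $X_4$ are explicit, and verify it equals $-\dfrac{a^2b^2}{a^2+b^2}$.
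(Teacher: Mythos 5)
Your reduction to the identity $r_{pol}^2 = \tfrac{1}{2}\left(|X_3-X_4|^2-R^2\right)$ via Euler's relation $|X_3-X_4|^2=9R^2-\sum l_i^2$ is exactly the paper's starting point, and your MacBeath argument coincides with the paper's: the caustic foci $X_3,X_4$ are stationary and $R$ is fixed, so conservation is immediate. Your evaluation of the constant is actually more explicit than the paper's: you solve $R^2=4a^2$ from the two invariants of the preceding lemma, deduce $|X_3-X_4|^2=4(a^2-b^2)$, and get $-2b^2$ by substitution, whereas the paper simply cites CAS simplification. That part is fine, and arguably an improvement.

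On the dual family, however, you assert that ``neither $R$ nor $X_4$ is fixed,'' and this is wrong on the second count: the paper's proof of part (ii) rests precisely on the fact that for the dual pair the orthocenter $X_4$ \emph{is} stationary, at the common center of the two ellipses. That fact is what turns $|X_3-X_4|^2-R^2$ into $|X_3(t)|^2-R(t)^2$, i.e., the power of a fixed point (the common center) with respect to the circumcircle --- which is exactly the ``cleaner alternative'' you wished for but believed unavailable. Your fallback plan (parametrize the family, compute $R(t)$, $X_3(t)$, $X_4(t)$, CAS-simplify, and evaluate at the isosceles position) would still succeed, since the brute-force computation does not depend on the erroneous claim, and the paper itself also invokes CAS for the final expression; but the structural reduction you dismissed is the actual mathematical content of the paper's argument for the dual case, and your write-up should be corrected on this point before it can stand as a proof.
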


\begin{proof}
The first is direct from $|X_3-X_4|^2=9R^2-\sum{l_i^2}=2 r_{pol}^2+R^2$ \cite[Orthocenter, eqn.14]{mw}, noting that the MacBeath circumradius $R$ is fixed and the caustic foci on the circumcenter $X_3$ and orthocenter $X_4$ are stationary. For the dual family, the orthocenter $X_4$ is stationary at the common center. Given the first relation it would be sufficient to prove that $|X_3(t)|^2-R^2(t)$ is constant. The final expression is obtained via CAS simplification.
\end{proof}

\begin{observation}
For the dual family, $\L_3$ is an ellipse homothetic to $\E$ with factor
$c^2/( 2(a^2+b^2))$.
\end{observation}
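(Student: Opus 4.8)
The plan is to reduce the claim to the evaluation of the circumcenter $X_3$ at the two triangles of the family that are symmetric about a coordinate axis. First I would pin down the caustic explicitly. Writing $\E:x^2/a^2+y^2/b^2=1$ and the concentric, axis-aligned dual caustic as $\K:x^2/p^2+y^2/q^2=1$, the Poncelet triangle closure condition for two concentric ellipses is the linear relation $p/a+q/b=1$; this is derivable in one line by forcing the $y$-symmetric triangle with apex $(0,b)$ and base on $y=-q$ to have its two slanted sides tangent to $\K$. For the dual family, $\K$ is the polar image of $\E$ in a circle of radius $\rho$, hence has semi-axes $(p,q)=(\rho^2/a,\rho^2/b)$; substituting into the closure relation gives $\rho^2=a^2b^2/(a^2+b^2)$, so that $(p,q)=\big(ab^2/(a^2+b^2),\,a^2b/(a^2+b^2)\big)$. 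As a sanity check, $\rho^2$ equals $-r_{pol,dual}^2$, matching the preceding proposition.

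Next I would argue that $\L_3$ is a centered, axis-aligned ellipse. That it is an ellipse follows from the general fact (used throughout, cf. \cite{garcia2024-incircle}) that circumcenter loci of these Poncelet families are conics, bounded in this case. The dual configuration is invariant under reflection in both coordinate axes; these reflections carry the family of triangles to itself and send each triangle's circumcenter to the circumcenter of the reflected triangle, so $\L_3$ is invariant under both reflections, forcing it to be centered at $O$ and axis-aligned, say $x^2/A^2+y^2/B^2=1$. The vertices $(\pm A,0)$ and $(0,\pm B)$ are exactly the positions at which $X_3$ lands on a coordinate axis, and by the same symmetry these are realized precisely by the two axis-symmetric triangles.

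It then remains to compute those two circumcenters, which is elementary. For the $y$-symmetric triangle $(0,b),(\pm x_0,-q)$ with $x_0^2=a^2(b^2-q^2)/b^2$, equidistance of the on-axis circumcenter $(0,Y_c)$ from apex and base gives $Y_c=(b^2-q^2-x_0^2)/\big(2(b+q)\big)$, which simplifies to $Y_c=-bc^2/\big(2(a^2+b^2)\big)$; hence $B=bk$ with $k=c^2/(2(a^2+b^2))$. The analogous $x$-symmetric triangle $(\mp a,0),(p,\pm y_0)$ yields $X_c=-ac^2/\big(2(a^2+b^2)\big)$, so $A=ak$. Thus $A:B=a:b$ and $\L_3=k\,\E$, establishing that $\L_3$ is homothetic to $\E$ with factor $c^2/(2(a^2+b^2))$. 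The only real obstacle is the bookkeeping of the first two steps—fixing the dual caustic through the closure relation and certifying that $\L_3$ is a centered, axis-aligned ellipse—since once these are in place the two short circumcenter computations finish the proof; alternatively one could bypass the symmetry argument with a single CAS verification that $X^2/a^2+Y^2/b^2$ is constant along a rational parametrization of the family.
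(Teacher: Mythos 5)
Your derivation reaches the right formula and, unlike the paper, actually proves it: the paper offers this only as an unproved Observation, in keeping with its CAS-experimental style (the neighboring propositions are themselves settled by ``CAS-based simplification''). Your computational core checks out. The closure condition $p/a+q/b=1$ for a concentric, axis-aligned pair is correct (tangency of the slanted sides of the $y$-symmetric triangle gives $pb=a(b-q)$, and Poncelet's porism promotes the single closed triangle to the whole family); the dual caustic is then $(p,q)=\left(ab^2/(a^2+b^2),\,a^2b/(a^2+b^2)\right)$, which matches the paper's definition since $(q,p)$ is proportional to $(a,b)$, i.e., the caustic rotated by $90^\circ$ is homothetic to $\E$; and the two axis-symmetric circumcenters do come out to $\left(0,\mp bc^2/(2(a^2+b^2))\right)$ and $\left(\mp ac^2/(2(a^2+b^2)),0\right)$, giving semi-axes in ratio $a:b$, hence $\L_3=k\,\E$ with $k=c^2/(2(a^2+b^2))$.

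The one genuine weak point is your justification that $\L_3$ is a conic in the first place. Citing \cite{garcia2024-incircle} is off target: that reference concerns families circumscribing a circular caustic (an incircle), which the dual family does not have, and there is no blanket theorem that circumcenter loci of Poncelet families are conics. The gap is easy to close with tools the paper already invokes: for the dual family the orthocenter $X_4$ is stationary at the common center $O$ (this is stated in the proof of the preceding proposition), so the Euler-line relation $X_3=(3X_2-X_4)/2$ exhibits $\L_3$ as the homothety, centered at $O$ with ratio $3/2$, of the barycenter locus $\L_2$; and $\L_2$ is a conic for any Poncelet triangle family by the Schwartz--Tabachnikov theorem on vertex centroids \cite{sergei2016-com}, cited elsewhere in the paper. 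Since your explicit points show the locus is not a single point and is bounded, it is an ellipse, and your reflection argument then secures centeredness and axis-alignment, after which the two circumcenter computations finish the proof. Your proposed CAS fallback would also close the gap, but at the cost of the synthetic content that makes your argument preferable to the paper's bare assertion.
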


\begin{conjecture}
A Poncelet triangle family maintains the orthocenter $X_4$ stationary only if the conics are configured as focal-$X_4$, MacBeath or Dual.
\end{conjecture}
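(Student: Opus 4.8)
The plan is to reduce the stationarity of $X_4$ to a condition on two more tractable centers and then to classify by the metric type of the two conics. Using the Euler-line relation $X_4 = 3X_2 - 2X_3$ (equivalently $\vec{X_3X_4}=3\,\vec{X_3X_2}$, as in \cref{lem:x2-macbeath}), the orthocenter is stationary along the family if and only if $3X_2(t)-2X_3(t)$ is constant, where $t$ is the Poncelet parameter. Since the centroid is the unique affinely equivariant center (as used in \cref{prop:affine-macbeath}), its locus is easy to control, so essentially the whole difficulty is funneled into the circumcenter $X_3$, which depends on the Euclidean structure. Because the orthocenter is a similarity covariant, I would fix the outer conic up to a similarity and split the analysis according to whether each of the two conics is a circle or a genuine ellipse. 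A triangle's inconic is always an ellipse, so the caustic is an ellipse or a circle, and a bounded Poncelet triangle family forces the outer conic to be an ellipse or a circle; this yields exactly four subcases, three of which should correspond to the three named families.

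\textbf{Outer conic a circle.} Here $X_3$ sits at the fixed center, so $X_2(t)$ is constant if and only if $X_4(t)$ is. The foci of the caustic are an isogonal-conjugate pair \cite{beluhov2007-isogonal-foci} (see \cref{def:isog}), and $X_3,X_4$ are themselves isogonal conjugates; hence if one focus of the caustic coincides with the circumcenter $X_3$ (the fixed center), the other is forced to be $X_4$, so the caustic is precisely the MacBeath inconic and $X_4$ is stationary by \cref{lem:x2-macbeath}. I would prove the converse — that a circle-inscribed family with stationary centroid must have a caustic focus at the center — by computing the vertex/area centroid locus over a circle-inscribed Poncelet family and checking that it collapses to a point only in that configuration. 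This isolates the MacBeath family.

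\textbf{Caustic a circle} (the incircle setting of \cite{garcia2024-incircle}). Now the caustic center is the stationary incenter $X_1$, and I would use the explicit locus of $X_4$ supplied by the framework of \cite{garcia2024-incircle}: for a fixed outer ellipse the incircle-inscribed families form a two-parameter family indexed by the caustic center, and the locus of $X_4$ is a conic whose semi-axes are explicit functions of that center. Forcing this conic to degenerate to a point is a finite algebraic system whose only solution should place the caustic center at $C_4=[\pm a^2 c/(2a^2-c^2),0]$ with $X_4$ on the focus, i.e.\ the focal-$X_4$ family. The remaining subcase — \textbf{both conics genuine ellipses} — is the one I expect to be the main obstacle: no affine or circular normalization is left to exploit, the configuration space is largest, and one must show that the only way for $3X_2(t)-2X_3(t)$ to be constant is the dual configuration, in which $X_4$ rests at the common center. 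My approach here would be to parametrize the family by elliptic functions of $t$, expand $3X_2-2X_3$ in an elliptic/Fourier series, and demand that every nonconstant harmonic vanish, reading off polynomial constraints on the axis ratios and relative position of the two ellipses; the dual condition (homothety after a $90^\circ$ rotation) should emerge as the unique solution.

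Finally I would assemble exhaustiveness. The fourth subcase (both conics circles) gives only the rotationally symmetric equilateral family, a degenerate common limit where $X_4$ trivially sits at the shared center, while for non-concentric circles (Chapple's porism) $X_4$ sweeps a genuine circle and is never stationary, so this subcase contributes nothing new. The hard part throughout is exhaustiveness rather than sufficiency: verifying that each named family keeps $X_4$ fixed is a direct CAS check — indeed it is already carried out in the preceding propositions — but proving that no further families exist requires eliminating the Poncelet parameter and decomposing the resulting variety of ``stationary'' configurations, and then showing it has exactly the three expected components, especially in the all-ellipse subcase. That last step is the one I do not expect to close cleanly, which is why the statement is posed as a conjecture.
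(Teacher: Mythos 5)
First, a structural point: this statement is posed in the paper as a conjecture — the paper offers no proof of it — so your proposal cannot be compared against a paper argument and must be judged on its own merits. Judged that way, your organization is sensible: the case split (outer conic circle vs.\ ellipse, caustic circle vs.\ ellipse) correctly places each named family in its own cell — MacBeath in (circle, ellipse), focal-$X_4$ in (ellipse, circle), Dual in (ellipse, ellipse), with concentric circles as the degenerate $a=b$ limit of Dual — and the reduction via $X_4 = 3X_2 - 2X_3$ together with the isogonal conjugacy of inconic foci (\cref{def:isog}, \cite{beluhov2007-isogonal-foci}) is the right way to recognize the MacBeath caustic, consistent with \cref{lem:x2-macbeath} and \cref{prop:affine-macbeath}.

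However, the proposal is a roadmap, not a proof, and its gaps sit exactly at the exhaustiveness steps that are the conjecture's entire content. (i) In the circle-outer case you need the converse implication: a circle-inscribed family with stationary centroid must have a caustic focus at the center. You only assert that a locus computation ``should'' show this; note that \cref{prop:affine-macbeath} establishes existence of a MacBeath caustic for each admissible center, but nowhere does the paper (or your argument) show these are the \emph{only} caustics with stationary $X_2$. (ii) In the circle-caustic case, the claim that degenerating the $X_4$-locus of \cite{garcia2024-incircle} to a point forces $C_4=\left[\pm a^2c/(2a^2-c^2),0\right]$ is plausible but is not carried out; it is an algebraic elimination you have named rather than performed. (iii) The all-ellipse case — the only cell containing the Dual family, hence the heart of the matter — is left entirely open; the elliptic-function/Fourier scheme is a research program, and you concede it yourself. (iv) A smaller but real issue: you exclude non-elliptic outer conics by assuming the family is bounded, yet the conjecture as stated imposes no such hypothesis, and the paper itself cites parabola-inscribed Poncelet families \cite{bellio2022-parabola-inscribed}; a complete proof must address or explicitly exclude those configurations. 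In short, sufficiency of the three families is already covered by the surrounding propositions; everything you wrote toward necessity defers its hard steps, so the statement remains exactly what the paper says it is — a conjecture.
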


\section{\torp{Iso-$X_7$}{Iso-X(7)}}

Referring to \cref{fig:four-families} (bottom right):

\begin{proposition}
For the circular caustic with center $C_7=\left[k_7/(2 a),0\right]$ and radius $r_7=b^2/(2 a)$, the Gergonne point will be stationary at:
\[X_7=\left[\frac{2 a k_7}{4 a^2 - b^2},0\right], \]
where $k_7=\sqrt{4 a^4-5 a^2 b^2+b^4}$.
\end{proposition}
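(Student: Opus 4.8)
The plan is to mirror the iso-$X_2$ argument: first fix Poncelet closure via the radius formula quoted above, then show the Gergonne point is $\phi$-independent along the resulting family and read off its location from the one symmetric member. Writing $C_7=[x_c,0]$ (so $y_c=0$) and $c^2=a^2-b^2$, the formula of \cite[Prop.~2]{garcia2024-incircle} gives the closure radius $r(x_c)=\bigl(b\sqrt{a^4-c^2x_c^2}-ab^2\bigr)/c^2$. The first, routine, check is that $x_c=k_7/(2a)$ returns $r=b^2/(2a)=r_7$: using $k_7^2=(4a^2-b^2)\,c^2$ one gets $a^4-c^2x_c^2=b^2(3a^2-b^2)^2/(4a^2)$, whose square root collapses to $b(3a^2-b^2)/(2a)$, so the numerator simplifies to $b^2c^2/(2a)$ and $r=b^2/(2a)$. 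Hence the stated $C_7,r_7$ are genuinely Poncelet-compatible.

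Next I would parametrize the family by a single vertex $V_1=(a\cos\phi,b\sin\phi)$ on $\E$: the two tangents from $V_1$ to the fixed caustic cut $\E$ again at $V_2(\phi),V_3(\phi)$, and closure (guaranteed by the previous step) makes $V_2V_3$ tangent as well. This produces explicit, if unwieldy, coordinates and sidelengths $l_i(\phi)$. Using the barycentric form of the Gergonne point $X_7=[\,1/(s-l_1):1/(s-l_2):1/(s-l_3)\,]$, $s=\tfrac12\sum_i l_i$ \cite{mw}, its Cartesian position is the weighted average $X_7(\phi)=\bigl(\sum_i w_i V_i\bigr)/\bigl(\sum_i w_i\bigr)$ with $w_i=1/(s-l_i)$. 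The crux is to prove, by CAS simplification exactly as in the other propositions here, that for $x_c=k_7/(2a)$ both coordinates of $X_7(\phi)$ reduce to $\phi$-free quantities; equivalently, that the locus swept by $X_7(\phi)$ collapses to a single point, in the spirit of setting the $X_2$-locus semi-axes to zero in iso-$X_2$.

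To pin down the stationary point it suffices to evaluate $X_7$ on the unique family member symmetric about the major axis, the isosceles triangle with apex $V_1=[a,0]$ and base the vertical chord at $x_0=x_c-r_7$, so $V_2=[x_0,y_0]$, $V_3=[x_0,-y_0]$ with $y_0^2=b^2(a^2-x_0^2)/a^2$. Here $w_2=w_3$, so $X_7$ lies on the $x$-axis and its abscissa reduces to $(a\,y_0+2x_0(m-y_0))/(2m-y_0)$ with $m=|V_1V_2|$; substituting the explicit $x_c=k_7/(2a)$, $r_7=b^2/(2a)$ and simplifying yields $2ak_7/(4a^2-b^2)$ (one may cross-check with the apex-at-$[-a,0]$ member, which must agree once stationarity is known).

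The main obstacle is the stationarity step. Unlike the centroid, whose barycentric weights are the constants $[1:1:1]$ and whose locus is documented in \cite{garcia2024-incircle}, the Gergonne weights $1/(s-l_i)$ are non-polynomial in $\phi$ (they involve the semiperimeter and the three square-root sidelengths), so clearing denominators and certifying exact $\phi$-independence is a heavy symbolic computation. I would expect the cleanest route is to introduce the contact-angle (tangent half-angle) variables of the caustic to rationalize the $l_i$ before handing the identity to a CAS, rather than attempting to exhibit and collapse a locus curve whose conic nature is not a priori clear.
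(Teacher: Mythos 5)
Your plan is correct, and it is consistent with what the paper actually does: the paper states this proposition with no proof at all, in line with its practice elsewhere of certifying such identities by CAS simplification, which is exactly the engine of your stationarity step. The two computations you carry out explicitly both check out: since $k_7^2=(4a^2-b^2)c^2$, one gets $a^4-c^2x_c^2=b^2(3a^2-b^2)^2/(4a^2)$, so the closure formula of \cite[Prop.~2]{garcia2024-incircle} indeed returns $r_7=b^2/(2a)$; and for the axially symmetric member (apex at $[a,0]$, base on the vertical tangent $x=x_c-r_7$) one has $s-l_1=m-y_0$ and $s-l_2=s-l_3=y_0$, giving the abscissa $(a\,y_0+2x_0(m-y_0))/(2m-y_0)$, which does simplify to $2ak_7/(4a^2-b^2)$. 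The one improvement worth making is that the heavy $\phi$-dependent certificate you are (rightly) worried about can be bypassed by the route the paper itself hints at in the observation immediately following the proposition: the iso-$X_7$ family is the tangential-triangle family of a Brocard porism, i.e., its contact triangles are Brocard-porism triangles inscribed in the circle with center $C_7$ and radius $r_7$; since the Gergonne point of any triangle is the symmedian point of its contact triangle, and the symmedian point $X_6$ is known to be stationary over the Brocard porism \cite{shail1996-brocard,garcia2020-family-ties}, stationarity of $X_7$ follows at once. What remains in that approach is only a static verification (that the polar image of $\E$ with respect to this circle is a Brocard inellipse), not a $\phi$-dependent identity; with stationarity granted, your symmetric-member evaluation then legitimately pins down the stated location.
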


\begin{observation}
The iso-$X_7$ family is the polar image of the Brocard porism \cite{reznik2022-brocard-converging,shail1996-brocard} with respect to the circumcircle, i.e., its tangential triangle. The former's stationary Gergonne point $X_7$ coincides with the stationary symmedian point $X_6$ of the Brocard porism \cite{garcia2020-family-ties}.
\end{observation}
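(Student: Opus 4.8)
The plan is to establish the correspondence synthetically via polarity with respect to the common circumcircle, and then to identify the two stationary centers through the classical fact that a triangle and its tangential triangle are perspective at the symmedian point.

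First I would fix the circumcircle $\Omega$ of the Brocard porism and dualize the entire configuration with respect to $\Omega$. Each triangle $T$ of the porism is inscribed in $\Omega$ and circumscribes the Brocard inellipse $\B$, and the symmedian point $X_6$ is stationary over the family \cite{garcia2020-family-ties}. Under the polarity induced by $\Omega$, a vertex $V\in\Omega$ of $T$ maps to its polar, i.e.\ the tangent to $\Omega$ at $V$; the three such tangents are precisely the sides of the tangential triangle $T'$. Dually, each side of $T$ (a tangent to $\B$) maps to a point on the dual conic $\B^{*}$, and these points are the vertices of $T'$. Hence every $T'$ is inscribed in $\B^{*}$ while each of its sides is tangent to $\Omega$; as the contact is internal, $\Omega$ is the fixed incircle of all $T'$. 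As $T$ traverses the porism, $T'$ therefore traverses a Poncelet family inscribed in the ellipse $\B^{*}$ and circumscribing the fixed circle $\Omega$, i.e.\ an ellipse-inscribed family about a fixed incircle.

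Next I would locate the Gergonne point of $T'$. Because each side of $T'$ is tangent to its incircle $\Omega$ at a vertex of $T$, the incircle touches $T'$ exactly at the vertices of $T$; thus $T$ is the contact (intouch) triangle of $T'$. The Gergonne point $X_7(T')$ is by definition the perspector of $T'$ with its contact triangle, hence the perspector of $T'$ and $T$. By the classical theorem that a triangle and its tangential triangle are perspective at the symmedian point \cite{mw}, this perspector is $X_6(T)$. Therefore $X_7(T')=X_6(T)$, which is stationary over the porism. This simultaneously shows that the tangential family has $X_7$ stationary — so, being an ellipse-inscribed Poncelet family about a fixed incircle, it is an iso-$X_7$ family — and that its stationary Gergonne point is the stationary symmedian point of the Brocard porism. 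Uniqueness of the iso-$X_7$ caustic for a prescribed outer ellipse then identifies it with the family of the preceding Proposition.

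The main obstacle is to confirm that $\B^{*}$ is a genuine ellipse rather than a hyperbola: the polar reciprocal of $\B$ is an ellipse precisely when the center of $\Omega$, namely the circumcenter $X_3$, lies inside $\B$. This holds in the relevant near-equilateral range (in the equilateral limit $\B$ degenerates to the incircle, whose interior contains $X_3$), and should be checked across the full family. A secondary, purely computational task is to transport the known parameters of the Brocard inellipse through the polarity to match the explicit caustic center $C_7$ and radius $r_7$ of the Proposition; alternatively this step is bypassed by the uniqueness argument above once stationarity of $X_7$ is in hand.
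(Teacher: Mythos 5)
The paper offers no proof of this statement---it is an Observation asserted with citations only---so there is no in-paper argument to compare against; your proposal supplies one. Its core chain is correct and is surely the intended reasoning: the polarity in the circumcircle $\Omega$ sends vertices of a porism triangle $T$ to the sides of its tangential triangle $T'$ and sides of $T$ to vertices of $T'$ on the dual conic $\B^{*}$, making $T$ the intouch triangle of $T'$; hence the Gergonne point $X_7(T')$ is the perspector of $T'$ and $T$, which equals $X_6(T)$ by the tangential-triangle perspector theorem, and stationarity of $X_6$ over the Brocard porism \cite{garcia2020-family-ties} transfers to $X_7$ over the polar family.

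However, the obstacle you flag at the end is not a formality to be ``checked across the full family'': it genuinely fails for general Brocard porisms, and it is the same issue as your unexamined clause ``as the contact is internal.'' The circle $\Omega$ is the incircle of $T'$ (rather than an excircle) exactly when $T$ is acute; and since the sides of the porism triangles sweep out \emph{all} tangents of $\B$, the intersection of all porism triangles is precisely the closed interior of $\B$, so all porism triangles are acute if and only if $X_3$ lies inside $\B$, if and only if $\B^{*}$ is an ellipse. Your two concerns are thus one condition, and it fails whenever the porism's Brocard angle satisfies $\cot\omega\geq 2$: such porisms contain right triangles, for which two of the three tangent lines are parallel and $T'$ degenerates. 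Consistently, the paper's iso-$X_7$ invariant $\sum\tan(\theta_i/2)=\sqrt{4a^2-b^2}/a$, read on the contact triangles (whose angles are $\pi/2-\theta_i/2$), is exactly the Brocard relation $\cot\omega=\sqrt{4a^2-b^2}/a\in(\sqrt{3},2)$, so only porisms in that range are matched to iso-$X_7$ families. The clean repair is to run your polarity in the opposite direction: dualize the iso-$X_7$ family in its fixed incircle. Intouch triangles are automatically acute, the dual of $\E$ is automatically an ellipse because the incircle's center lies inside $\E$, and your perspector identity gives the contact family a stationary symmedian point; a Poncelet family inscribed in a fixed circle with a common symmedian point is precisely a Brocard porism \cite{shail1996-brocard}. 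This direction also removes your reliance on an unproved uniqueness of the iso-$X_7$ caustic for a prescribed outer ellipse.
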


\begin{proposition}
The family conserves the sum of half-angle tangents, and this is given by:
\[ \sum_{i=1}^{3}{\tan\frac{\theta_i}{2}} = \frac{\sqrt {4\,{a}^{2}-{b}^{2}}}{a}\rd \]
\end{proposition}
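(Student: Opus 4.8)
The plan is to first reduce the sum to a ratio of classical triangle quantities, then obtain its invariance from the Brocard interpretation recorded in the Observation above, and finally pin down the constant by evaluating on the symmetric member of the family.

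First I would use the half-angle formula $\tan(\theta_i/2)=r/(s-l_i)$, where $s$ is the semiperimeter and $r$ the inradius. Writing $x_i=s-l_i$, one has $x_1+x_2+x_3=s$, $x_1x_2x_3=\mathrm{Area}^2/s=r^2 s$, and $\sum_{i<j}x_ix_j=r^2+4Rr$ (the last coming from the standard identity $\sum_{i<j}l_il_j=s^2+r^2+4Rr$ together with $\sum_{i<j}x_ix_j=-s^2+\sum_{i<j}l_il_j$). Hence
\[\sum_{i=1}^3\tan\frac{\theta_i}{2}=r\sum_{i=1}^3\frac{1}{x_i}=r\cdot\frac{\sum_{i<j}x_ix_j}{x_1x_2x_3}=\frac{4R+r}{s}.\]
Since the caustic radius $r=r_7=b^2/(2a)$ is fixed over the family, it remains to prove that $(4R+r)/s$ is conserved and to compute its value.

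For the conservation I would invoke the Observation that each iso-$X_7$ triangle is the tangential triangle, with respect to the circumcircle, of a triangle $T'$ belonging to the Brocard porism, whose circumcircle is exactly the iso-$X_7$ caustic. If $A_i$ denote the angles of $T'$, the tangent–chord (kite) construction gives $\theta_i=\pi-2A_i$, so that $\tan(\theta_i/2)=\cot A_i$ and therefore $\sum_i\tan(\theta_i/2)=\sum_i\cot A_i=\cot\omega$, where $\omega$ is the Brocard angle of $T'$. The defining property of the Brocard porism is precisely that $\omega$ is stationary; hence $\cot\omega$, and with it $\sum_i\tan(\theta_i/2)$, is invariant over the family (this also matches $\cot\omega\geq\sqrt3$ with equality in the equilateral/circular case $a=b$).

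Finally, since the quantity is constant, I would evaluate it on the symmetric member: the isosceles triangle with one vertex on the major axis of $\E$ and incircle centered at $C_7$. For that triangle the apex angle is $\pi-2\beta$ and the two equal base angles are $\beta$, so $\sum_i\tan(\theta_i/2)=\cot\beta+2\tan(\beta/2)$; determining $\beta$ (equivalently $R$ and $s$) from the ellipse together with the data $r_7=b^2/(2a)$ and $C_7=[k_7/(2a),0]$, and substituting into $(4R+r)/s$, should yield $\sqrt{4a^2-b^2}/a$. I expect the main obstacle to be this last evaluation: extracting the symmetric triangle's vertices from the joint ellipse–incircle incidence conditions is algebraically heavy, and one must also verify acuteness of $T'$ so that the angle relation $\theta_i=\pi-2A_i$ (and hence the whole reduction) is legitimate. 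A direct CAS simplification, as used elsewhere in the paper, provides a safe cross-check of the closed-form value.
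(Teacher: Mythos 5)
The paper offers no proof of this proposition at all --- it is stated bare, consistent with the paper's practice elsewhere of CAS-based verification --- so there is no argument of the authors' to match yours against; what you propose is a genuine synthetic proof, and its core is correct. The reduction $\sum\tan(\theta_i/2)=(4R+r)/s$ is a valid standard identity (your symmetric-function computation with $x_i=s-l_i$ checks out), though it becomes optional once the Brocard argument is in place. That argument is sound, granting the paper's (also unproved) Observation: the porism member $T'$ attached to an iso-$X_7$ triangle $T$ is the polar image of $T$ in the caustic, i.e.\ its intouch triangle, whose vertices are the Poncelet tangency points. In particular, the acuteness of $T'$ that you flag as something still to be verified is automatic and free of charge: the intouch triangle of any triangle is acute, its angles being $A_i=\pi/2-\theta_i/2$ --- and this formula \emph{is} the relation $\theta_i=\pi-2A_i$ you wanted, with no tangent--chord argument needed. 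Hence $\sum_i\tan(\theta_i/2)=\sum_i\cot A_i=\cot\omega$ is invariant, since the Brocard porism conserves the Brocard angle $\omega$.

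The one genuine gap is the closed-form value: your isosceles-member evaluation is only a plan, and as you concede it is algebraically heavy. It can be bypassed entirely using data the paper already provides. The porism's circumcircle is the caustic, so its circumcenter is $O=C_7=[k_7/(2a),0]$ and its circumradius is $r_7=b^2/(2a)$; its stationary symmedian point is $K=X_6=X_7=[2ak_7/(4a^2-b^2),0]$ by the paper's Observation and earlier proposition. The classical relation $|OK|^2=R^2\,(1-3\tan^2\omega)$ for any triangle with circumradius $R$ and Brocard angle $\omega$ (this is the key identity in the Brocard-porism literature, cf.\ \cite{shail1996-brocard}; it follows from $|OK|^2=R^2-3\,l_1^2l_2^2l_3^2/(\sum l_i^2)^2$ and $\tan\omega=4\,\mathrm{Area}/\sum l_i^2$) then finishes the job: since
\[
|OK|=k_7\Bigl(\frac{2a}{4a^2-b^2}-\frac{1}{2a}\Bigr)=\frac{k_7\,b^2}{2a(4a^2-b^2)}\,,\qquad
\frac{|OK|^2}{r_7^2}=\frac{k_7^2}{(4a^2-b^2)^2}=\frac{a^2-b^2}{4a^2-b^2}\,,
\]
using $k_7^2=(4a^2-b^2)(a^2-b^2)$, one gets $3\tan^2\omega=3a^2/(4a^2-b^2)$, i.e.\ $\cot\omega=\sqrt{4a^2-b^2}/a$, which is exactly the claimed constant. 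With this substitution your proof closes in a few lines and is entirely CAS-free; the CAS evaluation you suggest as a fallback would also be acceptable, but only as a cross-check in the paper's own style, not as the more illuminating route.
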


The $N>3$ generalization of the Brocard porism is known as the Harmonic family, studied in \cite{roitman2022-harmonic}. 

\begin{conjecture}
The sum of half-angle tangents is also conserved for the polar image of the Harmonic family, for all $N>3$.
\end{conjecture}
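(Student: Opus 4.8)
The plan is to transport the conjecture across the polar duality that defines the tangential family, turning it into a single conservation statement about the Harmonic family itself, and then to settle that statement using the family's generalized-Brocard structure.

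First I would record the angle transformation induced by the polar image with respect to the circumcircle $\Gamma$ (center $O$, radius $R$) in which the Harmonic $N$-gon is inscribed. Writing $V_1,\dots,V_N$ for the vertices and letting $2\phi_i$ be the central angle of the arc $V_iV_{i+1}$ (so the side has length $l_i=2R\sin\phi_i$), the tangential vertex $W_i$ is the pole of the chord $V_iV_{i+1}$, i.e. the meet of the tangents to $\Gamma$ at $V_i$ and $V_{i+1}$. The quadrilateral $OV_iW_iV_{i+1}$ has right angles at $V_i$ and $V_{i+1}$ and central angle $2\phi_i$ at $O$, so the interior angle of the tangential polygon at $W_i$ is $\theta_i=\pi-2\phi_i$. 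Hence $\tan(\theta_i/2)=\tan(\pi/2-\phi_i)=\cot\phi_i$, and
\[ \sum_{i=1}^{N}\tan\frac{\theta_i}{2}=\sum_{i=1}^{N}\cot\phi_i=\sum_{i=1}^{N}\frac{\sqrt{4R^2-l_i^2}}{l_i}. \]
Because the circumcircle is common to the whole Harmonic family, $R$ is constant, so the conjecture is equivalent to the assertion that $\sum_i\cot\phi_i$ is invariant over the Harmonic family.

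This reduction both anchors the base case and isolates the difficulty. For $N=3$ each $\phi_i$ is exactly the inscribed angle subtending the opposite side, so $\sum_i\cot\phi_i=\cot A_1+\cot A_2+\cot A_3=\cot\omega$ is the cotangent of the stationary Brocard angle of the Brocard porism; this recovers the proven triangle case, including the value $\sqrt{4a^2-b^2}/a$. For $N>3$ I would invoke the generalized-Brocard description of the Harmonic family from \cite{roitman2022-harmonic} — a stationary generalized Brocard point $\Omega$ with all angles $\angle\Omega V_iV_{i+1}$ equal to a stationary generalized Brocard angle $\omega$ — and try to express $\sum_i\cot\phi_i$ in terms of this stationary data, so that its invariance becomes manifest.

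The hard part will be this last identity. Unlike the triangle case there is no elementary collapse of $\sum_i\cot\phi_i$ to a single angle for $N>3$, so the argument cannot merely cite a generic cyclic-polygon identity; it must genuinely use the poristic constraint (indeed $\sum_i\cot\phi_i$ is \emph{not} conserved for an arbitrary circle-inscribed Poncelet family, only for the Harmonic caustic). If a synthetic identity resists, the fallback is analytic: uniformize the Poncelet map for the Harmonic caustic by the elliptic rotation parameter $t$, under which the vertices become equally spaced translates on the Poncelet elliptic curve and each $\cot\phi_i(t)$ is a shift of one elliptic function. The symmetric sum $\sum_i\cot\phi_i$ is then a Poncelet-invariant symmetric rational function that descends to a meromorphic function on the curve; showing that for the Harmonic caustic its only candidate poles, at degenerate sides $\phi_i\to0$, cancel in the symmetric sum would force it, by Liouville, to be constant. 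Establishing this pole cancellation — the precise point at which the Harmonic structure enters — is the main technical burden, and it is the same mechanism that underlies both the triangle result and the $\sin$-analogue conserved by the polar image of bicentric polygons \cite{bellio2022-parabola-inscribed}.
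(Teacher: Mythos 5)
First, note that the statement you are proving is presented in the paper as a \emph{conjecture}: the paper offers no proof of it (only the $N=3$ case, the iso-$X_7$ proposition, is established there), so there is no paper argument to compare yours against; the only question is whether your attempt actually settles it. It does not. What you have is a correct and genuinely useful \emph{reduction}, followed by a research plan whose decisive step you yourself label ``the main technical burden.'' The reduction is sound: with $2\phi_i$ the central angle of the chord $V_iV_{i+1}$, the tangential vertex $W_i$ does have interior angle $\theta_i=\pi-2\phi_i$ (for convex members, $2\phi_i<\pi$), so $\tan(\theta_i/2)=\cot\phi_i$ and, since the Harmonic family shares one circumcircle, the conjecture becomes the invariance of $\sum_i\cot\phi_i$ over the Harmonic family. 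Your $N=3$ check is also right: there $\phi_i$ equals the opposite inscribed angle, $\sum_i\cot A_i=\cot\omega$, and the Brocard porism has stationary Brocard angle, which recovers the proven triangle case.

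The gap is that this reformulated invariance \emph{is} the conjecture --- the duality step merely translates it from the tangential polygon to the inscribed one --- and neither of your two proposed routes for $N>3$ is carried out. The synthetic route is stated only as ``try to express $\sum_i\cot\phi_i$ in terms of the stationary Brocard data''; no such identity is produced, and as you concede there is no analogue of the collapse $\sum\cot A_i=\cot\omega$ beyond triangles. The analytic route (elliptic parametrization of the Poncelet map plus a Liouville argument) is a legitimate strategy, but its crux --- showing that the poles of the complexified $\sum_i\cot\phi_i$ cancel precisely because the caustic is the Harmonic one, and not for a generic caustic --- is exactly where all the content lies, and it is asserted rather than proved. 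Until that pole analysis (or an equivalent symmetric-function identity) is executed, the statement remains what the paper says it is: a conjecture supported by the $N=3$ case and by experiment, with your contribution being an equivalent and possibly more tractable formulation, not a proof.
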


Since the Gergonne center $X_7$ is stationary, the quantity $|X_1 X_7|$ will also be conserved. For a generic triangle with semiperimeter $s$, this is given by \cite[Corollary 4.2]{queiroz2012-gergonne}:
\[ |X_1-X_7|^2 = r^2\left[1-\frac{3 s^2}{(r+4 R)^2}\right]\rd \]

\begin{proposition}
Over Iso-$X_7$ triangles, the previous expression reduces to:
$|X_1-X_7|^2=b^4c^2/\left(4 a^2 (4a^2 - b^2)\right)$.
\end{proposition}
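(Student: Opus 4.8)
The plan is to bypass the generic semiperimeter formula entirely and instead exploit that both points involved are explicitly located on the major axis. For any Poncelet family about a fixed caustic, the incenter $X_1$ of every triangle is exactly the center of that caustic; here the caustic is the incircle, so $X_1 = C_7 = [k_7/(2a),0]$ is fixed. By the preceding proposition the Gergonne point is stationary at $X_7 = [2a k_7/(4a^2-b^2),0]$. Since both lie on the $x$-axis, the conserved distance $|X_1 - X_7|$ is merely the difference of their $x$-coordinates, which in particular confirms that the generic expression $r^2[1 - 3s^2/(r+4R)^2]$ must collapse to a constant over this family.

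The computation I would carry out is to factor $k_7$ out of the coordinate difference,
\[
\frac{2a k_7}{4a^2-b^2} - \frac{k_7}{2a} = k_7\,\frac{4a^2 - (4a^2-b^2)}{2a(4a^2-b^2)} = \frac{k_7\, b^2}{2a(4a^2-b^2)},
\]
and then square to obtain $|X_1-X_7|^2 = k_7^2\, b^4 / \bigl(4a^2(4a^2-b^2)^2\bigr)$. The only non-mechanical ingredient is the factorization $k_7^2 = 4a^4 - 5a^2 b^2 + b^4 = (4a^2-b^2)(a^2-b^2)$, combined with $c^2 = a^2 - b^2$. Substituting $k_7^2 = (4a^2-b^2)c^2$ cancels one power of $(4a^2-b^2)$ and yields the claimed value $b^4 c^2/\bigl(4a^2(4a^2-b^2)\bigr)$.

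As an independent check that the \emph{generic} formula indeed reduces to this, I would substitute the fixed inradius $r = r_7 = b^2/(2a)$ and, using that $|X_1 - X_7|$ is invariant over the family, evaluate $s$ and $R$ at the single symmetric member — the isosceles triangle with a vertex on the major axis — before simplifying via CAS. This route is heavier, since it requires the explicit sidelengths (hence $s$) and circumradius $R$ of that distinguished triangle, whereas the direct-distance argument needs neither. I expect no genuine obstacle: the entire difficulty is the one-line factorization of $k_7^2$, which is verified immediately by expanding $(4a^2-b^2)(a^2-b^2)$.
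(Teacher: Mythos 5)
Your proposal is correct, and it takes a genuinely different route from what the paper intends. The paper frames this proposition as a reduction of the generic formula $|X_1-X_7|^2 = r^2\left[1-3s^2/(r+4R)^2\right]$, and (consistent with its other proofs) the implied derivation is a CAS-based substitution of the family's inradius $r_7=b^2/(2a)$ together with the varying $s$ and $R$. You bypass that entirely: both $X_1$ and $X_7$ are stationary points on the major axis with known coordinates — $X_1$ at the caustic center $C_7=[k_7/(2a),0]$ (since the circular caustic is the incircle of every triangle in the family) and $X_7$ at $[2ak_7/(4a^2-b^2),0]$ by the preceding proposition — so the squared distance is a one-line coordinate subtraction plus the factorization $k_7^2 = 4a^4-5a^2b^2+b^4 = (4a^2-b^2)(a^2-b^2) = (4a^2-b^2)c^2$, which checks out. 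Your computation is correct and yields exactly the claimed value; it is more elementary, requires no CAS, and simultaneously establishes that the generic expression is constant over the family (because it always equals the squared distance between two stationary points). What it does \emph{not} do is independently confirm the cited formula of Queiroz et al., which the paper's substitution route implicitly exercises; your suggested ``independent check'' via the symmetric isosceles configuration would recover that, but as you note it is optional. One small phrasing slip: your opening claim that ``for any Poncelet family about a fixed caustic, the incenter is the center of that caustic'' is false for non-circular caustics (where no fixed incenter need exist); it holds here only because the caustic is a circle, as you immediately clarify, so the argument itself is unaffected.
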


\begin{definition}
The Adams circle of a triangle is centered on the incenter $X_1$ and has radius $R_A$ given by \cite[Adams' Circle]{mw}: $R_A = \frac{r \sqrt{\rho^2 - l_1 l_2 l_3 s - \rho s^2}}{\rho - s^2}$, where $\rho = l_1 l_2+l_2 l_3+l_3 l_1$ and $s$ is the semi-perimeter.
\end{definition}

\begin{proposition}
The Iso-$X_7$ family conserves $R_A$ and its value is given by:
\[ R_A=\frac{b^2}{2a} \sqrt{\frac{5a^2 - b^2}{4a^2 - b^2}}\, \rd \]
\end{proposition}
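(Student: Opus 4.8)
The plan is to first rewrite the Adams radius $R_A$ purely in terms of the fundamental triangle invariants $r$, $R$, and $s$, and only afterwards invoke the two quantities the family is already known to fix. Recall the standard symmetric-function identities $\rho = l_1 l_2 + l_2 l_3 + l_3 l_1 = s^2 + r^2 + 4Rr$ and $l_1 l_2 l_3 = 4Rrs$ (the latter from $\Delta = rs = l_1 l_2 l_3/(4R)$). Substituting these into the definition of $R_A$, the denominator becomes $\rho - s^2 = r(r+4R)$, while the radicand should factor as
\[ \rho^2 - l_1 l_2 l_3\, s - \rho\, s^2 = r^2\left[(r+4R)^2 + s^2\right]. \]
I expect this factorization to be the one genuinely computational step, and I would confirm it by CAS, in keeping with the style of the surrounding propositions. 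It then collapses the whole expression to
\[ R_A = r\sqrt{1 + \frac{s^2}{(r+4R)^2}}\, \cdot \]

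Next I would observe that the combination $s^2/(r+4R)^2$ is itself conserved over iso-$X_7$, \emph{without} needing $R$ or $s$ to be individually constant (they are not, since the triangles are inscribed in a genuine ellipse rather than a circle). Indeed, the inradius $r = r_7 = b^2/(2a)$ is the fixed caustic radius, and the preceding proposition supplies the conserved value of $|X_1-X_7|^2 = r^2\!\left[1 - 3s^2/(r+4R)^2\right]$; since both $r$ and $|X_1-X_7|$ are invariant, so is $s^2/(r+4R)^2$. Plugging this back into the displayed formula for $R_A$ immediately yields that $R_A$ is conserved.

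Finally, to pin down the value, I would eliminate the term $s^2/(r+4R)^2$ between the two expressions, obtaining the clean affine relation
\[ 3R_A^2 + |X_1-X_7|^2 = 4r^2, \]
so that $R_A^2 = (4r^2 - |X_1-X_7|^2)/3$. Substituting $r = b^2/(2a)$ and the value $|X_1-X_7|^2 = b^4 c^2/\left(4 a^2 (4a^2 - b^2)\right)$ from the previous proposition, together with $c^2 = a^2 - b^2$, and simplifying gives $R_A^2 = b^4(5a^2-b^2)/\left(4a^2(4a^2-b^2)\right)$, which is the asserted value. The only real obstacle is the radicand factorization in the first step; once that is in hand the conservation is essentially forced by the two standing invariants $r$ and $|X_1-X_7|$, and the closed form is a one-line substitution.
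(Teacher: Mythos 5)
Your proof is correct, and it takes a genuinely different route from the paper, which states this proposition with no proof at all (in keeping with the CAS-based verifications of neighboring results). Your key step, the factorization
\[
\rho^2 - l_1 l_2 l_3\, s - \rho s^2 \;=\; r^2\left[(r+4R)^2 + s^2\right],
\]
is a genuine polynomial identity and does not even need a CAS: writing $u=r(r+4R)$, the identities $\rho = s^2+u$ and $l_1l_2l_3=4Rrs$ give $\rho^2 - l_1l_2l_3\, s - \rho s^2 = s^2u + u^2 - 4Rrs^2 = r^2s^2 + u^2$, as claimed; combined with $\rho - s^2 = r(r+4R)$ this yields $R_A = r\sqrt{1+s^2/(r+4R)^2}$. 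From there your logic is sound: $r$ is fixed (it is the caustic radius $b^2/(2a)$) and $|X_1-X_7|$ is fixed (both centers are stationary), so the identity $|X_1-X_7|^2 = r^2\left[1-3s^2/(r+4R)^2\right]$ from the preceding proposition forces $s^2/(r+4R)^2$, and hence $R_A$, to be conserved, even though $R$ and $s$ vary individually over the family. The elimination giving $3R_A^2+|X_1-X_7|^2=4r^2$ is also correct, and substituting $r=b^2/(2a)$, $|X_1-X_7|^2=b^4c^2/\left(4a^2(4a^2-b^2)\right)$, and $c^2=a^2-b^2$ indeed yields $R_A^2 = b^4(5a^2-b^2)/\left(4a^2(4a^2-b^2)\right)$, matching the asserted value. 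What your approach buys over an implicit CAS computation: it isolates a universal triangle identity (the linear relation among $R_A^2$, $|X_1-X_7|^2$, and $r^2$) which shows that the Adams radius is conserved in \emph{any} Poncelet family keeping both $X_1$ and $X_7$ stationary, and it derives the closed form directly from the previously established invariant $|X_1-X_7|$ rather than from a parametrization of the family.
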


\bibliographystyle{maa-eprint}
\bibliography{refs,refs_00_book,refs_01_pub,refs_03_sub,refs_04_unsub}

\end{document}